\pgfplotsset{compat=1.15}
\newcommand{\R}{\mathbb{R}}
\newcounter{number}
{\begin{list}%
{{\bf ~Proposition~\arabic{number}.}}{
\usecounter{number}
\setlength{\labelwidth}{0in}
\setlength{\leftmargin}{0in}
\setlength{\rightmargin}{0in}
\setlength{\itemsep}{1em}}}%
{\end{list}}
\newcommand{\solution}[1]{}
\newcommand{\reals}{{\mathbb R}}
\newcommand{\norm}[1]{ \left\|#1\right\|}
\newcommand{\eunorm}[1]{\norm{#1}_2}
\newcommand{\fnorm}[1]{\norm{#1}_F}
\newcommand{\fnormbig}[1]{\big\| #1 \big\|_F}
\newcommand{\abs}[1]{\left| #1 \right|}
\newcommand{\inner}[2]{\Big\langle #1, #2 \Big\rangle}
\newcommand{\sphere}[1]{\mathbb{S}^{#1-1}}
\DeclareMathOperator*{\argmax}{arg\,max}
\DeclareMathOperator*{\argmin}{arg\,min}
\DeclareMathOperator{\diag}{diag}
\DeclareMathOperator{\offdiag}{offdiag}
\DeclareMathOperator{\Span}{span}
\DeclareMathOperator{\kernel}{ker}
\DeclareMathOperator{\Cohe}{Coherence}
\newcommand{\ind}{\mathsf{Ind}}
\newcommand{\tensor}{\otimes}
\newcommand{\compose}{\circ}
\newcommand{\pesudoinverse}{\ssymbol{2}}
\def\@fnsymbol#1{\ensuremath{\ifcase#1\or *\or \dagger\or \ddagger\or
   \mathsection\or \mathparagraph\or \|\or **\or \dagger\dagger
   \or \ddagger\ddagger \else\@ctrerr\fi}}
\newcommand{\ssymbol}[1]{^{\@fnsymbol{#1}}}
\newcommand{\Normal}[2]{\mathcal{N}(#1, #2)}
  \newcommand{\Acal}{{\mathcal A}}
  \newcommand{\Ocal}{{\mathcal O}}
   \newcommand{\Prob}{\mathrm{Prob}}   %Probability
\DeclareMathOperator*{\Exp}{{\mathbb E}}    % expectation of a random variable
\newcommand{\suppress}[1]{}
\newcommand{\gap}{\mathsf{gap}}
\newtheorem{theorem}{Theorem}
\newtheorem{corollary}[theorem]{Corollary}
\newtheorem{lemma}[theorem]{Lemma}
\newtheorem{remark}[theorem]{Remark}
\newtheorem*{remark*}{Remark}
\numberwithin{equation}{section}
\DeclareMathOperator{\cum}{Cum}
\title{Randomized Joint Diagonalization \\ of Symmetric Matrices }
\author{Haoze He\footnotemark[1] \and Daniel Kressner\footnote{The work of both authors was supported by the SNSF research project \emph{Probabilistic methods for joint and
singular eigenvalue problems}, grant number: 200021L\_192049. \'Ecole Polytechnique F\'ed\'erale de Lausanne (EPFL), Institute of Mathematics, 1015 Lausanne, Switzerland. E-mails: \href{mailto:haoze.he@epfl.ch}{haoze.he@epfl.ch},  \href{mailto:daniel.kressner@epfl.ch}{daniel.kressner@epfl.ch}}}
\begin{document}
\maketitle

\begin{abstract}
Given a family of nearly commuting symmetric matrices, we consider the task of computing 
an orthogonal matrix that nearly diagonalizes every matrix in the family.
In this paper, we propose and analyze randomized joint diagonalization (RJD) for performing this task. RJD applies a standard eigenvalue solver to random linear combinations of the matrices. Unlike existing optimization-based methods, RJD is simple to implement and leverages existing high-quality linear algebra software packages. Our main novel contribution is to prove robust recovery: Given a family that is $\epsilon$-near to a commuting family, RJD jointly diagonalizes this family, with high probability, up to an error of norm $\mathcal O(\epsilon)$. We also discuss how the algorithm can be further improved by deflation techniques and demonstrate its state-of-the-art performance by numerical experiments with synthetic and real-world data.
\end{abstract}

\pagestyle{myheadings}
\thispagestyle{plain}

%Section 1
\section{Introduction}

%\textcolor{red}{TODO for Haoze: Capitalize section/subsection headings consistently.}

It is well known that a commuting family of real symmetric matrices $\Acal = \{A_k \in \reals^{n \times n}\}_{k=1}^{d}$ can be jointly diagonalized, that is, there is an orthogonal matrix $Q \in \reals^{n \times n}$ such that each matrix $Q^T A_k Q$ is diagonal. Such joint diagonalization (JD) problems arise in a variety of applications. For example, in linear instantaneous Blind Source Separation, source signals are reconstructed from the observed mixed signals by performing joint diagonalization on fourth-order cumulant matrices \cite{Cardoso93blindbeamforming}, covariance matrices of different signal segments \cite{942614} or autocorrelation
matrices \cite{554307}; see also~\cite{MiettinenJari2017Bssb} for an overview. In Latent Variable Models (LVM), parameters are estimated through orthogonal decompositions of low-order moment tensors \cite{MR3270750}, which can be further reduced to JD of slices (or linear combinations thereof) of the moment tensors \cite{MR3270750, MR2262974, pmlr-v38-kuleshov15}. JD also appears in manifold learning~\cite{EynardDavide2015MMAb}, parameter identification problems~\cite{MR3936543} and computer graphics~\cite{KovnatskyA2013Cqb}.

In the applications mentioned above, the commutativity assumption is idealistic and rarely satisfied in practice, due to noise, estimation error or even round-off error. Instead, one considers
a family of \emph{nearly} commuting matrices $\tilde{\Acal} = \{ A_k + E_k\}_{k=1}^{d}$,
where $ \Acal = \{ A_k\}_{k=1}^{d}$ is commuting and each $E_k$ is a symmetric matrix of small norm.
JD now aims at finding an orthogonal matrix $\tilde{Q}$ that \emph{nearly} diagonalizes each $\tilde{A}_k \in \tilde{\Acal}$.
%Furthermore, even in the noiseless setting, if we do not assume the simplicity of each eigenvalue of each matrix $A_k \in \Acal$, neither one of the matrices  nor any fixed linear combination can determine the orthogonal joint diagonalizer $Q$ alone, which forbids us from using existing high-quality methods of standard eigenvalue problems like LAPACK \cite{lapack99}.
%Due to the above two difficulties, given a nearly commuting family of matrices  $\tilde{\Acal} = \{\tilde{A}_k\}_{k=1}^{d}$,
Therefore, most existing methods view JD as an optimization problem that aims at minimizing
the error contained in the off-diagonal parts of $\tilde{Q}^T \tilde{A}_k \tilde{Q}$.
Specifically, one considers
\begin{equation}
\label{eq:optimization_task}
    \min_{\tilde{Q} \in \reals^{n \times n}, \tilde{Q}^T \tilde{Q} = I} \mathcal{L}(\tilde{Q})
\end{equation}
given a suitable measure $\mathcal{L}(\tilde{Q})$ for off-diagonality. A natural choice is
\begin{equation}
\label{eq:least_square_measure}
    \mathcal{L}(\tilde{Q}) := \sum_{k = 1}^{d}\big\| \offdiag(\tilde{Q}^T \tilde{A}_k \tilde{Q}) \big\|_F^2,
\end{equation}
where $\offdiag$ sets diagonal entries to zero and preserves off-diagonal entries, and $\|\cdot\|_F$ denotes the Frobenius norm. When each $\tilde{A}_k$ is symmetric positive definite, another popular choice is 
\begin{equation}
\label{eq:kl_measure}
\mathcal{L}(\tilde{Q}) := \frac{1}{2n}\sum_{k = 1}^{d}[\log \det\diag(\tilde{Q}^T\tilde{A}_k\tilde{Q}) - \log\det(\tilde{Q}^T\tilde{A}_k\tilde{Q})],
\end{equation}
with $\diag(A) = A - \offdiag(A)$. This measure was introduced by Pham in \cite{Pham2001} and can be interpreted as  the Kullback-Leibler divergence between two multivariate Gaussian distributions with covariance matrices $\tilde{Q}^T\tilde{A}_k\tilde{Q}$ and $\diag(\tilde{Q}^T\tilde{A}_k\tilde{Q})$, respectively~\cite{MR3797813}. %The majority of other measures are variants of (\ref{eq:least_square_measure}) and (\ref{eq:kl_measure}).

Most existing optimization algorithms for solving~\eqref{eq:optimization_task} belong to one of the following three categories:
%Jacobi-like algorithms, quasi-Newton's methods and Riemannian optimization. %\textcolor{red}{??? Algorithm by Pham missing in this list? ???} 
 \begin{itemize}
     \item \emph{Jacobi-like algorithms} are a special case of coordinate descent methods, in which the matrix $\tilde{Q}$ is decomposed into $n(n-1)/2$ Givens rotations (that is, rotations restricted to the plane determined by a chosen pair of coordinates) and (\ref{eq:least_square_measure}) is minimized successively for each rotation in a specified order. The first Jacobi-like algorithm for JD was introduced in \cite{MR1238912}, with the closed-form solution for the optimal angle of each rotation  derived in \cite{MR1372928};
     this algorithm uses cyclic order. In~\cite{IshtevaAbsilVan2013}, the order is chosen to ensure a sufficiently large directional derivative; global convergence to stationary points of this algorithm is proved in~\cite{LiUsevichComon2018}. In \cite{Pham2001}, a potentially non-orthogonal $\tilde{Q}$ is decomposed into $n(n-1)/2$ invertible elementary transformations and (\ref{eq:kl_measure}) is minimized successively for each elementary transform.
     %It is generalized to nonorthogonal diagonalizer in \cite{4799135} by combining Givens and Hyperbolic rotations.
     
     \item 
      \emph{Quasi-Newton methods}, that is the Newton method with some approximation of the Hessian, have been proposed in~\cite{MR2247999} for minimizing~\eqref{eq:least_square_measure}, with multiplicative updates to ensure the orthogonality or invertibility of $\tilde{Q}$. In \cite{940221}, the problem of minimizing a modified variant of~\eqref{eq:least_square_measure} is reformulated as a subspace fitting problem and solved with a quasi-Newton method. In \cite{10.1007/978-3-642-00599-2_38}, a quasi-Newton method with an intrinsic scale constraint is applied to find a non-orthogonal joint diagonalizer based on the measure (\ref{eq:least_square_measure}). More recently, quasi-Newton methods based on the measure (\ref{eq:kl_measure}) have been developed in \cite{ablin:hal-01936887, de2021joint}.
      
     \item \emph{Riemannian optimization} leverages the fact that $\tilde Q$ belongs to a matrix manifold, e.g., the manifold of orthogonal matrices. In \cite{10.1007/978-3-540-30110-3_56}, Riemannian gradient descent is used for minimizing~\eqref{eq:least_square_measure}.  A trust-region-based algorithm on the oblique manifold to obtain a non-orthogonal joint diagonalizer is proposed in \cite{1661433}. In~\cite{MR3652848}, a Riemannian Newton method is applied to minimize a variant of (\ref{eq:kl_measure}). Recently, existing Riemannian optimization methods for (\ref{eq:least_square_measure}) and (\ref{eq:kl_measure}) as well as their variants have been unified into a common framework in \cite{MR4052642}. 
 \end{itemize}
 Besides these three main categories, JD optimization problems have also been approached through gradient-based methods~\cite{7080517, 10.1007/978-3-540-30110-3_12} and quadratic optimization~\cite{1677895}.
 
In principle, optimization-based JD algorithms come with major advantages: They benefit from existing optimization techniques and easily extend to more general settings by modifying the constraint~\cite{1661433, 10.1007/978-3-642-00599-2_38} or changing the parameterization~\cite{4799135, MR2247999}. However, to the best of our knowledge, none of the existing convergence results establishes global convergence to a global minimizer. Thus, none of these algorithms is guaranteed to recover a nearly diagonalizing transformation 
for a nearly commuting family. More importantly, the use of general optimization techniques often leads to computationally demanding algorithms. In particular, for $d = 1$ optimization can be expected to perform significantly worse compared to applying a standard eigenvalue solver~\cite{golubvanloan13}, such as MRRR \cite{MRRR05}, to $A_1$.

\emph{Randomized algorithms} approach JD problems in a radically different way: The diagonalizer $\tilde Q$ is extracted from applying a standard eigenvalue solver to one or several random linear combinations of the nearly commuting matrices $\tilde A_k$. Such randomized algorithms have already been successfully applied in various fields, including learning latent variables, parameter identification, and polynomial root finding \cite{ pmlr-v23-anandkumar12, MR3332930, Corless1997, MR3936543}; see Section~\ref{sec:rjd} for more details. Nevertheless, we are not aware of an analysis that would justify the use of such randomized techniques without further, possibly strong assumptions on the data. In particular, the analysis of \cite{MR3936543} requires sufficiently large gaps for the eigenvalues of the underlying ground truth matrices.
% In \cite{Corless1997}, one generic convex combination is used to obtain the joint Schur decomposition of multiplication matrices for polynomial root finding. In \cite{pmlr-v23-anandkumar12, MR3332930}, one random linear combination is used to decompose a third-order moment tensor. And in \cite{MR3936543}, several random linear combinations of potentially non-normal matrices are used to compute eigenvalues for parameter identification.

In this paper, we propose and analyze a randomized joint diagonalization (RJD) algorithm that can be significantly faster than optimization-based algorithms, while achieving a similar level of accuracy. Almost entirely based on existing standard eigenvalue solvers, RJD is very simple to implement. Moreover, we establish robust recovery for RJD: If the input error (that is, the matrices $E_k$ perturbing the commuting matrices $A_k$) has norm $\epsilon$ then the output error of RJD (that is, the square root of $\mathcal L(\tilde Q)$ from~\eqref{eq:least_square_measure}) is $\mathcal O(\epsilon)$, with high probability. Our main result (Theorem~\ref{theorem no gap}) does not require any assumption on $A_k$, in particular it imposes no assumption on eigenvalue gaps.

The rest of this paper is organized as follows. In Section \ref{sec:rjd}, the basic RJD is introduced and it is shown to exactly recover a joint diagonalizer $Q$ for a commuting family. Section~\ref{sec:perturbationanalysis} extends this result to robust recovery. Improvements of the basic algorithm with deflation techniques are discussed in Section \ref{sec:deflation}. In Section \ref{sec:numeric}, we demonstrate the accuracy and efficiency of our algorithms through various numerical experiments, including synthetic data, Blind Source Separation and Single Topic Models.  

Independently of this work, Sutton \cite{Sutton23} recently developed a new, deterministic method for jointly diagonalizing $d=2$ symmetric matrices and established a robust recovery guarantee for that method.

%Section added for revision
\subsection{Nearly commuting vs almost commuting}

As discussed above, we consider a \emph{nearly} commuting family of matrices in this work, that is, a (small) perturbation makes the matrices commute. In the literature, one can also find the notion of \emph{almost} commuting matrices. In particular, a pair of matrices $\tilde A, \tilde B$ is called \emph{almost} commuting if the commutator $[\tilde A, \tilde B] = \tilde A \tilde B - \tilde B \tilde A$ has small norm.

By the triangle inequality, \emph{nearly} commuting implies \emph{almost} commuting. However, the converse direction is much more subtle. Given $n\times n$ symmetric $\tilde A, \tilde B$ with $\|[\tilde A, \tilde B]\|_2 \leq \delta$, where $\| \cdot \|_2$ denotes the spectral norm, one needs to determine symmetric perturbations $E_A, E_B$, of spectral norm $\epsilon(\delta)$ not much larger than $\delta$, such that $\tilde A + E_A$ and $\tilde B + E_B$ commute. This problem has been studied for decades in mathematical physics and operator theory. In \cite{lin1997almost}, it was  proven that there exist perturbations such that $\epsilon(\delta)$ converges to zero as $\delta$ converges to zero and $\epsilon(\delta)$ does not depend on other properties of $\tilde A, \tilde B$ (such as $n$). %and later refined in
%\cite{herrera2022hastings}, it was showed that 
This qualitative result was improved to $\epsilon(\delta) =\Ocal({\delta ^{1/5}})$ in~\cite{hastings2009making} and, more recently, to $\epsilon(\delta) = \Ocal(\delta ^{1/2})$ in \cite{kachkovskiy2016distance}, which is optimal.

It is simple to compute the norm of commutators, but the discussion above suggests that this might not be the most appropriate measure when commutativity is violated due to errors in the input data, such as roundoff error or noise.
Let us remark that the work of this paper allows one to verify, with high probability, whether a matrix family is nearly commuting by attempting to jointly diagonalize the family with our algorithms.

%Section 2
\subsection{Common eigenvectors and eigenvalue vectors}
\label{subsec:commoneigenvector}

This section summarizes notation used throughout this work. Given a commuting family of symmetric matrices $\mathcal A = \{A_k\}_{k=1}^{d}$, we call $x \not = 0$ a \emph{common eigenvector} of $\mathcal{A}$ if $x$ is an eigenvector of each $A_k \in \mathcal{A}$, that is, $A_k x = \lambda^{(k)} x$ for some (eigenvalue) $\lambda^{(k)} \in \mathbb R$. The vector $\Lambda = [\lambda^{(1)},\ldots,\lambda^{(d)}]^T$ collecting these eigenvalues is called \emph{eigenvalue vector}.
In total, there are $n$ (counting multiplicities) such eigenvalue vectors
\[
\Lambda_i = [\lambda^{(1)}_i,\ldots,\lambda^{(d)}_i]^T, \quad i = 1,\dots, n.
\]
A subspace $\mathcal X \subset \reals^n$ is called a \emph{common invariant subspace} of $\Acal$ if it is spanned by common eigenvectors.
%\textcolor{red}{??? check later whether really needed ??? Furthermore, $\mathcal{A}$ restricted to $\mathcal{X}$, $\mathcal{A}' = \{A_k|_\mathcal{X}: \mathcal{X} \to \mathcal{X}\}_{k=1}^{d}$, is also an exactly commuting family with $m$ (not necessarily distinct) eigenvalue vectors $\Lambda_{i_1},\ldots, \Lambda_{i_m}$. }

% Let $B' = \begin{bmatrix} x_{i_1} & \ldots & x_{i_m} \end{bmatrix}$, $\mathcal{A}' = \{B'^T A_k B'\}_{k=1}^{d}$,

%Section 3
\section{Basic RJD algorithm}
\label{sec:rjd}

\subsection{The algorithm}

The basic idea of randomized joint diagonalization is to reduce a joint eigenvalue problem to one or several standard eigenvalue problems through random linear combinations. More specifically, consider a standard Gaussian random vector $\mu\sim \Normal{0}{I_d}$, that is, the entries $\mu_k$, $k = 1,\ldots, d$, are i.i.d. standard normal random variables. Given a (nearly commuting) family of real symmetric matrices $\tilde{\Acal} = \{\tilde{A}_k\}_{k=1}^{d}$, we then define 
\[\tilde{A}(\mu) = \sum_{k=1}^{d} \mu_k \tilde{A}_k. \]
Because $\tilde{A}(\mu)$ is symmetric, there exists an orthogonal matrix $\tilde{Q} \in \reals^{n \times n} $ such that $\tilde{Q}^T \tilde{A}(\mu) \tilde{Q}$ is diagonal. As we will prove, this matrix is often a good approximate joint diagonalizer and the success probability can be boosted by multiple independent trials. Algorithm~\ref{alg:rjd}, called RJD in the following, summarizes these ideas.

Approaches similar to Algorithm~\ref{alg:rjd} have appeared several times in the literature and in different fields. In learning latent variable models, a joint diagonalizer of dimension reduced and column scaled slices of the supersymmetric third-order moment tensor is obtained through diagonalizing one random linear combination in \cite[Algorithm A]{pmlr-v23-anandkumar12}. Note that the involved matrices are not necessarily symmetric {because of the scaling}. In \cite[Algorithm 1]{MR3332930}, the whitened slices of the third order moment tensor, which are symmetric, are jointly diagonalized through diagonalizing one random linear combination. In this case, involved matrices are symmetric and this approach is equivalent to RJD with one trial. In parameter identification problems,  a family of nearly commuting matrices that are potentially non-normal are jointly diagonalized through diagonalizing complex random linear combinations in \cite[Algorithm 1]{MR3936543}. In polynomial root finding, a generic convex combination of nearly commuting matrices is used in \cite{Corless1997} to bring each matrix in the family into nearly block upper triangular form.

\begin{algorithm}[H]
\caption{\textbf{R}andomized \textbf{J}oint \textbf{D}iagonalization (RJD)}
\textbf{Input:} \text{Family of symmetric matrices $\tilde{\Acal} =\{\tilde{A}_k \in \reals^{n \times n}\}_{k=1}^{d}$, number of trials $L$.}\\
 \textbf{Output:} \text{Joint approximate diagonalizer $\tilde{Q}$.}
\begin{algorithmic}
\label{alg:rjd}
\FOR{$i=1$ to $L$}
    \STATE Draw $\mu^{(i)}$ from distribution $\Normal{0}{I_d}$.
    \STATE Compute $\tilde{A}(\mu^{(i)}) = \mu_1^{(i)} \tilde A_1 + \cdots + \mu_d^{(i)} \tilde A_d$.
    \STATE
    Compute $\tilde{Q}_i$ that diagonalizes $\tilde{A}(\mu^{(i)})$.
\ENDFOR
\STATE Let $i^* = \argmin_{i}\Big\{ \sum_{k = 1}^{d}\fnormbig{\offdiag(\tilde{Q}_i^T \tilde{A}_k \tilde{Q}_i)}^2\Big\}$.
\RETURN $\tilde Q \equiv \tilde{Q}_{i^*}$
\end{algorithmic}
\end{algorithm}
Because the Frobenius norm satisfies \[\|\tilde A_k\|_F^2 = \|\diag(\tilde{Q}_i^{T} \tilde{A}_k \tilde{Q}_i)\|_F^2 + \|\offdiag(\tilde{Q}^T_i \tilde{A}_k \tilde{Q}_i)\|_F^2,\] it suffices to compute the diagonal elements and select the best trial in Algorithm~\ref{alg:rjd} through
\[i^* = \argmax_{i}\Big\{ \sum_{k = 1}^{d}\fnormbig{\diag(\tilde{Q}_i^T \tilde{A}_k \tilde{Q}_i)}^2\Big\}.\]

\subsection{Correctness for exactly commuting matrices}\label{subsec:exact_correct}
Applied to a commuting family of symmetric matrices $\mathcal A = \{A_k\}_{k = 1}^d$, Algorithm \ref{alg:rjd} returns an exact joint diagonalizer with probability $1$. To see this, we first note that every eigenvalue $\lambda(\mu)$ of $A(\mu)$ can be expressed as the inner product with an eigenvalue vector 
$\Lambda = [\lambda^{(1)},\ldots,\lambda^{(d)}]^T$ of $\mathcal A$:
\begin{equation} \label{eq:eiginnerproduct}
\lambda(\mu) = \mu_1 \lambda^{(1)} + \cdots + \mu_d \lambda^{(d)} = \langle \mu, \Lambda \rangle.
\end{equation}
\begin{lemma} \label{lemma:multieigenvalues}
With the notation introduced above, let $\lambda(\mu) = \langle \mu, \Lambda \rangle$ and 
$\tilde \lambda(\mu) = \langle \mu, \tilde \Lambda \rangle$ be eigenvalues of $A(\mu)$ for $\mu \sim \Normal{0}{I_d}$. Then 
$\lambda(\mu) = \tilde \lambda(\mu)$ implies $\Lambda = \tilde \Lambda$ with probability 1.
\end{lemma}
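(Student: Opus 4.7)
The plan is to reduce the statement to a standard fact about Gaussian vectors: a non-trivial linear form vanishes on $\mu\sim\mathcal{N}(0,I_d)$ only with probability zero. Concretely, I would start by observing that $\lambda(\mu)-\tilde\lambda(\mu)=\langle \mu,\Lambda-\tilde\Lambda\rangle$, so the event $\{\lambda(\mu)=\tilde\lambda(\mu)\}$ coincides with the event $\{\langle \mu,\Lambda-\tilde\Lambda\rangle=0\}$. This reframes the claim as: if this inner product vanishes almost surely, then the defining vector must be zero.

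Next I would argue the contrapositive: suppose $\Lambda\neq\tilde\Lambda$ and set $v\eqdef\Lambda-\tilde\Lambda\in\reals^d\setminus\{0\}$. Then $\langle \mu,v\rangle$ is a (non-zero) linear combination of independent standard Gaussians, hence itself a Gaussian with variance $\|v\|_2^2>0$. A non-degenerate Gaussian assigns probability zero to any single value, so $\Pr[\langle\mu,v\rangle=0]=0$. Equivalently, the hyperplane $\{\mu:\langle\mu,v\rangle=0\}$ has Lebesgue measure zero in $\reals^d$, and therefore probability zero under the absolutely continuous law $\mathcal{N}(0,I_d)$.

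Putting these together, on the complement of a probability-zero event the equality $\lambda(\mu)=\tilde\lambda(\mu)$ forces $\Lambda-\tilde\Lambda=0$. I would close by noting that the statement is not really about eigenvalues at all once we have used commutativity to write each eigenvalue of $A(\mu)$ as a fixed linear functional $\langle\mu,\Lambda_i\rangle$ in $\mu$; the randomness in $\mu$ then automatically separates the at most $n$ eigenvalue vectors of $\Acal$, and a trivial union bound over the $\binom{n}{2}$ pairs of distinct eigenvalue vectors shows that, with probability one, \emph{all} distinct eigenvalue vectors give distinct eigenvalues of $A(\mu)$.

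There is no real obstacle here; the only thing to be slightly careful about is making clear that $\Lambda,\tilde\Lambda$ are deterministic quantities (intrinsic to the commuting family $\Acal$, chosen from the finite list introduced in Section~\ref{subsec:commoneigenvector}), so that the probability-zero statement is an honest statement about the Gaussian distribution of $\mu$ and not an artifact of a data-dependent choice. Once this is spelled out, the proof is a one-line application of the absolute continuity of the Gaussian measure.
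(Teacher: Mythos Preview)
Your proposal is correct and matches the paper's own proof essentially line for line: the paper simply notes that $\lambda(\mu)-\tilde\lambda(\mu)=\langle\mu,\Lambda-\tilde\Lambda\rangle\sim\mathcal{N}(0,\|\Lambda-\tilde\Lambda\|_2^2)$, so this difference vanishes with probability zero when $\Lambda\neq\tilde\Lambda$. Your additional remarks about the deterministic nature of $\Lambda,\tilde\Lambda$ and the union bound over pairs are helpful elaborations, but the core argument is identical.
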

\begin{proof}
Because of $\lambda(\mu) - \tilde \lambda(\mu) = \langle \mu,\Lambda-\tilde \Lambda \rangle \sim \Normal{0}{\|\Lambda-\tilde \Lambda\|_2^2}$, with $\|\cdot\|_2$ denoting the Euclidean norm, it follows that $\lambda(\mu) - \tilde \lambda(\mu) = 0$ happens with probability $0$ when $\Lambda \not= \tilde \Lambda$.
\end{proof}

\begin{theorem}
\label{thm:exactlycommuting}
Let $Q$ be an orthogonal matrix that diagonalizes $A(\mu) = \mu_1 A_1 + \cdots + \mu_d A_d$ for a commuting family of symmetric matrices $\mathcal A = \{A_k\}_{k = 1}^d$. Then 
$Q$ also diagonalizes each matrix $A_1,\ldots,A_d$ with probability $1$. 
\end{theorem}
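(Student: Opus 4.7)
The plan is to reduce the statement to an analysis of the common eigenspaces of $\mathcal A$ together with an application of Lemma~\ref{lemma:multieigenvalues}. First, I would invoke the fact that a commuting family of real symmetric matrices admits a simultaneous orthogonal eigendecomposition: there is an orthogonal direct sum decomposition $\reals^n = \mathcal X_1 \oplus \cdots \oplus \mathcal X_r$, where each $\mathcal X_j$ is a common invariant subspace in the sense of Section~\ref{subsec:commoneigenvector}, associated with a distinct eigenvalue vector $\Lambda_j = [\lambda^{(1)}_j,\ldots,\lambda^{(d)}_j]^T$. On each $\mathcal X_j$, every $A_k$ acts as the scalar multiple $\lambda^{(k)}_j I$, so by the identity \eqref{eq:eiginnerproduct} the random matrix $A(\mu)$ acts on $\mathcal X_j$ as the scalar $\langle \mu, \Lambda_j \rangle\, I$.

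Next, I would combine these $r$ eigenvalue vectors with Lemma~\ref{lemma:multieigenvalues}. Since $\Lambda_i \neq \Lambda_j$ for $i \neq j$, the lemma (applied to each of the finitely many pairs $i \neq j$ and combined via a union bound) yields $\langle \mu, \Lambda_i \rangle \neq \langle \mu, \Lambda_j \rangle$ almost surely. Consequently, with probability one, the numbers $\langle \mu, \Lambda_1 \rangle, \ldots, \langle \mu, \Lambda_r \rangle$ are exactly the distinct eigenvalues of $A(\mu)$, and the eigenspace of $A(\mu)$ associated with $\langle \mu, \Lambda_j \rangle$ coincides precisely with $\mathcal X_j$.

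Now let $Q$ be any orthogonal matrix that diagonalizes $A(\mu)$. Each column of $Q$ is a unit eigenvector of $A(\mu)$, so it must lie entirely inside one of the eigenspaces $\mathcal X_j$. Because $A_k$ acts as a scalar on $\mathcal X_j$, every vector in $\mathcal X_j$ is automatically an eigenvector of $A_k$. Hence each column of $Q$ is a common eigenvector of $\mathcal A$, and $Q^T A_k Q$ is diagonal for every $k \in \{1,\ldots,d\}$.

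The main subtlety is the handling of eigenvalue multiplicities in $A(\mu)$: if two distinct eigenvalue vectors $\Lambda_i \neq \Lambda_j$ produced the same inner product with $\mu$, then a diagonalizer $Q$ of $A(\mu)$ could pick orthonormal directions that mix $\mathcal X_i$ and $\mathcal X_j$, and such directions generally fail to be eigenvectors of the individual $A_k$'s. Lemma~\ref{lemma:multieigenvalues} is tailored exactly to rule out this collision with probability one, after which the rest of the argument is a direct consequence of the structure of simultaneous diagonalization.
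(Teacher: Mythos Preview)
Your proof is correct and relies on the same key ingredient as the paper, namely Lemma~\ref{lemma:multieigenvalues} to rule out collisions $\langle \mu, \Lambda_i\rangle = \langle \mu, \Lambda_j\rangle$ for distinct eigenvalue vectors. The organization differs slightly: you invoke the simultaneous diagonalization theorem for commuting symmetric matrices up front, decompose $\reals^n$ into the common eigenspaces $\mathcal X_j$, and then argue that these coincide almost surely with the eigenspaces of $A(\mu)$, so any diagonalizer of $A(\mu)$ automatically diagonalizes each $A_k$. The paper instead works in the other direction: starting from the given diagonalizer $Q$ of $A(\mu)$, it uses the commutation relation $[A(\mu),A_k]=0$ to show that each eigenspace of $A(\mu)$ is $A_k$-invariant (the off-diagonal block vanishes because $A_{22}(\mu)-\lambda_1(\mu)I$ is invertible), and then invokes Lemma~\ref{lemma:multieigenvalues} to conclude the restricted block $A_{11}^{(k)}$ is a scalar multiple of the identity, proceeding by induction. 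Your route is a bit more direct but treats simultaneous diagonalizability as a black box; the paper's route is more self-contained, essentially rederiving the needed structure from commutativity.
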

\begin{proof}
The result follows from standard linear algebra arguments \cite{Strang1988}; we include the proof for the sake of completeness.
Without loss of generality, we may assume that the first $n_1 \ge 1$ columns of $Q$ span the eigenspace 
$\mathcal X_1$ belonging to an eigenvalue $\lambda_1(\mu)$ of $A(\mu)$. Then
\[
Q^T A(\mu) Q = \begin{bmatrix}
 \lambda_1(\mu) I & 0 \\
 0 & A_{22}(\mu)
 \end{bmatrix}, \quad Q^T A_k Q
= \begin{bmatrix}
 A_{11}^{(k)} & \big( A_{21}^{(k)} \big)^T \\
 A_{21}^{(k)} & A_{22}^{(k)}
 \end{bmatrix}.
\]
Because $A(\mu)$ and $A_k$ commute, one has $(A_{22}(\mu) - \lambda_1(\mu) I)A_{21}^{(k)} = 0$. Because   $\lambda_1(\mu)$ is not an eigenvalue of $A_{22}(\mu)$, this implies $A_{21}^{(k)} = 0$. By  Lemma~\ref{lemma:multieigenvalues}, each $A_{11}^{(k)}$ has $n_1$ equal eigenvalues with probability $1$ and is, in turn, a multiple of the identity matrix. This completes the proof by induction.
\end{proof}

As a curiosity we note that existing results on exact recovery of $Q$ for commuting matrices often assume  distinct eigenvalues. For example, in \cite{MR3332930} exact recovery is established through showing that $A(\mu)$ has distinct eigenvalues with probability $1$ for the application under consideration.
 In \cite[Corollary 2.2]{MR3936543}, it is shown that a diagonalizer of $A(\mu)$ diagonalizes a family of commuting, possibly nonnormal matrices if $A(\mu)$ has distinct eigenvalues.

%Section 4
\section{Analysis of RJD}
\label{sec:perturbationanalysis}

In this section, we extend the statement of Theorem~\ref{thm:exactlycommuting} from exact to robust recovery.
Instead of an exactly commuting $\Acal = \{A_k\}_{k=1}^d$, we consider a nearly commuting family \begin{equation}
    \label{eq:nearly_commuting_family}
    \tilde{\Acal}~=~\{\tilde{A}_k = A_k + E_k\}_{k=1}^d,
\end{equation} where each $E_k$ is symmetric and $\fnorm{E_k}$ is assumed to be small. Robust recovery means that Algorithm $\ref{alg:rjd}$ returns $\tilde Q$ such that each $\tilde Q^T \tilde A_k \tilde Q$ has small off-diagonal error with high probability. We will establish two types of results: Theorem~\ref{nonasym theorem}
establishes an error bound that depends on the smallest gap between the eigenvalue vectors of the (unknown) groundtruth $\Acal$. Theorem~\ref{theorem no gap} removes this gap dependence, at the cost of a more pronounced dependence on $n$. To the best of our knowledge this is the first robust recovery result for any algorithm that aims at diagonalizing a nearly commuting family.

\subsection{Preliminary results}

\label{subsec:preliminary}
Our analysis will be based on the following results from probability and matrix perturbation theory.
\begin{lemma}\label{probability Bound}
Let $v  \in \reals^d$ with $\eunorm{v} = 1$. Suppose $d \geq 2$. If $u \in \reals^d$ follows a uniform distribution over the unit sphere $\sphere{d}$,  then for $C > 0$, the probability that
\[\abs{\langle u, v \rangle} \leq C \]
holds is at most $\sqrt{\frac{2d}{\pi}} C$.
\end{lemma}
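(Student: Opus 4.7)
The plan is to reduce to bounding the density of a single coordinate of a uniformly random point on $\sphere{d}$. Since the uniform distribution on the sphere is invariant under orthogonal transformations and $\eunorm{v} = 1$, I would first rotate so that $v = e_1$, reducing the task to showing $\Prob(|u_1| \leq C) \leq C\sqrt{2d/\pi}$, where $u_1$ is the first coordinate of $u$.

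Next, I would invoke the marginal density of $u_1$ on $[-1,1]$,
\[
f(t) \;=\; \frac{\Gamma(d/2)}{\sqrt{\pi}\,\Gamma((d-1)/2)}\,(1-t^2)^{(d-3)/2},
\]
which can be derived by writing $u = g/\eunorm{g}$ for $g \sim \Normal{0}{I_d}$ and observing that $u_1^2 \sim \Beta(1/2,(d-1)/2)$. For $d \geq 3$ this density is non-negative, symmetric, and maximized at $t = 0$, so
\[
\Prob(|u_1| \leq C) \;=\; \int_{-C}^{C} f(t)\,dt \;\leq\; 2C\cdot f(0) \;=\; \frac{2C\,\Gamma(d/2)}{\sqrt{\pi}\,\Gamma((d-1)/2)}.
\]
To close the argument I would appeal to the Wendel-type inequality $\Gamma(x+1)/\Gamma(x+1/2) \leq \sqrt{x+1/2}$, applied at $x = (d-2)/2$, which gives $\Gamma(d/2)/\Gamma((d-1)/2) \leq \sqrt{(d-1)/2} \leq \sqrt{d/2}$. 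Substituting produces the desired bound $\Prob(|u_1| \leq C) \leq C\sqrt{2d/\pi}$.

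Two boundary cases need a short separate treatment. For $d = 2$ the density blows up at $\pm 1$, but an explicit integration yields $\Prob(|u_1| \leq C) = (2/\pi)\arcsin(\min\{C,1\})$, and concavity of $\arcsin$ on $[0,1]$ gives $\arcsin(C) \leq (\pi/2)C$, so $\Prob(|u_1| \leq C) \leq C \leq C\sqrt{4/\pi}$. For $C \geq 1$ the right-hand side $C\sqrt{2d/\pi}$ already exceeds $1$ whenever $d \geq 2$, so the bound is trivial.

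The only real (mild) obstacle in this plan is the gamma-ratio step: a crude estimate of order $\sqrt{d}$ would suffice qualitatively, but matching the precise constant $\sqrt{2d/\pi}$ requires the sharp Wendel inequality. Everything else is elementary once the marginal density of $u_1$ is in hand.
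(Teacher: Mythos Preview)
The paper does not give its own proof; it simply cites the literature. Your self-contained argument is correct and is essentially the standard route: rotate to $v=e_1$, use the exact marginal density of the first coordinate of a uniform point on $\sphere{d}$, bound by the value at the mode, and control the gamma ratio. The Gautschi/Wendel bound you invoke (in any of its standard forms) yields $\Gamma(d/2)/\Gamma((d-1)/2)\le \sqrt{d/2}$, which is exactly what is needed.

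One small slip: $\arcsin$ is \emph{convex} on $[0,1]$ (its second derivative is $t/(1-t^2)^{3/2}>0$), not concave. Convexity is precisely what places the graph below the chord through $(0,0)$ and $(1,\pi/2)$, giving $\arcsin(C)\le (\pi/2)C$; so your inequality and the $d=2$ conclusion are correct, only the word should be changed.
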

\begin{proof}
See \cite[Lemma and Section 3]{MR708458}. See also \cite[Section 4.1.1]{lotz2020wilkinsons} and \cite[Lemma 5.2]{banks2022global}.
\end{proof}

\begin{lemma}
\label{lemma:invsubspace}
Let $A,E \in \reals^{n \times n}$ be symmetric with $\fnorm{E} = \epsilon$. Consider an invariant subspace $\mathcal{X}_0$ of $A$ with orthonormal basis $X_0 \in \reals^{n \times k}$ and let $X_0^\perp$ be any matrix such that $\begin{bmatrix}
 X_0 & X_0^\perp
\end{bmatrix}$ is an orthogonal matrix. Partition
\[  A = \begin{bmatrix}
 X_0 & X_0^\perp
\end{bmatrix} \begin{bmatrix}
A_{11}& 0 \\ 0 & A_{22}
\end{bmatrix} \begin{bmatrix}
 X_0 & X_0^\perp
\end{bmatrix}^T, \quad A_{11} \in \R^{k\times k}, \quad A_{22} \in \R^{(n-k)\times (n-k)},  \]
and assume
$$s_0 := \min\{\abs{\lambda -\nu}: \lambda \in \sigma(A_{11}),\nu \in \sigma(A_{22})\} > 0,\qquad \epsilon  < s_0/4,$$
where $\sigma(\cdot)$ denotes the spectrum of a matrix. Then the perturbed matrix $A+E$ has a unique invariant subspace $\mathcal{X}_E$ of dimension $k$ belonging to the eigenvalues within $\epsilon$ distance of $\sigma(A_{11})$. Moreover,
for any orthonormal basis $X_E$ of $\mathcal{X}_E$, there exists a basis $\hat X_0 $ of $\mathcal{X}_0$, $B \in \reals^{(n-k) \times k}$ and $C \in \reals^{k \times k}$ such that 
\[X_E = \hat X_0 + X_0^\perp \mathbb{T}^{-1}(B)C\]
with 
\[
    \|\hat X_0\|_2 \leq 1, \quad \fnorm{\mathbb{T}^{-1}(B)} \leq 1/2, \quad \fnorm{B} \leq \sqrt{5}\fnorm{EX_0}, \quad \eunorm{C} \leq 1,
\]
where $\|\cdot\|_2$ denotes the spectral norm and $\mathbb{T}$ is the Sylvester operator defined by
\[\mathbb{T}: \reals^{(n-k) \times k } \to  \reals^{(n-k) \times k }, \quad \mathbb{T}(X) = XA_{11} - A_{22}X.\]
\end{lemma}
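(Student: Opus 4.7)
The plan is to follow the standard Stewart--Kahan strategy for perturbed invariant subspaces, adapted so that the conclusion is phrased in the ``tangent'' form $X_E = \hat X_0 + X_0^\perp \mathbb{T}^{-1}(B)C$. First I express everything in the orthogonal basis $[X_0, X_0^\perp]$. Writing
\[
\begin{bmatrix} X_0 & X_0^\perp \end{bmatrix}^T E \begin{bmatrix} X_0 & X_0^\perp \end{bmatrix} = \begin{bmatrix} E_{11} & E_{12} \\ E_{12}^T & E_{22} \end{bmatrix},
\]
I look for an invariant subspace of $A+E$ of the form $\operatorname{range}(X_0 + X_0^\perp P)$ for some $P \in \reals^{(n-k)\times k}$. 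Imposing the invariance condition $(A+E)(X_0 + X_0^\perp P) = (X_0 + X_0^\perp P) M$ and eliminating $M$ leads, after a short block computation, to the nonlinear Sylvester-type equation
\[
\mathbb{T}(P) = E_{12}^T - P E_{11} + E_{22} P - P E_{12} P.
\]

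Next I set $B \eqdef \mathbb{T}(P)$ and solve the fixed-point problem $P = \Phi(P)$ with $\Phi(P) \eqdef \mathbb{T}^{-1}\bigl(E_{12}^T - PE_{11} + E_{22}P - PE_{12}P\bigr)$ on the Frobenius-norm ball $\{P : \fnorm{P} \leq 1/2\}$. The spectral separation hypothesis gives the standard operator bound $\fnorm{\mathbb{T}^{-1}(X)} \leq \fnorm{X}/s_0$, and from $\fnorm{E} = \epsilon < s_0/4$ I obtain both $\fnorm{\Phi(0)} \leq \epsilon/s_0 < 1/4$ and a Lipschitz contraction estimate of the form $\fnorm{\Phi(P_1)-\Phi(P_2)} \leq (2\epsilon/s_0 + \text{(quadratic term)}) \fnorm{P_1-P_2}$, which is strictly less than $1$ on the ball. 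The Banach fixed-point theorem then supplies existence and uniqueness of $P$ with $\fnorm{\mathbb{T}^{-1}(B)} = \fnorm{P} \leq 1/2$; this simultaneously yields the uniqueness claim for $\mathcal{X}_E$ because any other $k$-dimensional invariant subspace whose eigenvalues lie within $\epsilon$ of $\sigma(A_{11})$ corresponds to another fixed point in the same ball.

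To bound $B$, I use $\fnorm{EX_0}^2 = \fnorm{E_{11}}_F^2 + \fnorm{E_{12}^T}_F^2$ together with submultiplicative inequalities $\fnorm{PE_{11}} \leq \fnorm{P}\fnorm{E_{11}}$, $\fnorm{E_{22}P} \leq \fnorm{E_{22}}\fnorm{P}$, and $\fnorm{PE_{12}P} \leq \fnorm{P}^2 \|E_{12}\|_2$, substitute $\fnorm{P} \leq 1/2$ and $\fnorm{E_{22}}, \|E_{12}\|_2 \leq \epsilon < s_0/4$, and collect terms carefully; the bookkeeping is designed to yield the constant $\sqrt{5}$ in $\fnorm{B} \leq \sqrt 5 \fnorm{EX_0}$. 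Finally, I orthonormalize the basis of $\mathcal{X}_E$ by setting $C \eqdef (I + P^T P)^{-1/2}$ and $\hat X_0 \eqdef X_0 C$, so that $X_E = (X_0 + X_0^\perp P)C = \hat X_0 + X_0^\perp \mathbb{T}^{-1}(B) C$. Since $P^T P \succeq 0$, the eigenvalues of $I+P^T P$ are at least one, hence $\|C\|_2 \leq 1$ and $\|\hat X_0\|_2 \leq \|X_0\|_2 \|C\|_2 \leq 1$, as claimed. Any other orthonormal basis of $\mathcal{X}_E$ is obtained by right-multiplying by an orthogonal matrix, which can be absorbed into $C$ without violating $\|C\|_2 \leq 1$.

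The main technical obstacle is the last quantitative step: extracting the sharp constant $\sqrt 5$ (rather than a generic $\mathcal O(1)$) requires carefully tracking the interaction between $\fnorm{E_{11}}$, $\fnorm{E_{12}^T}$ and the a priori bound $\fnorm{P} \leq 1/2$, using the scalar inequality $a + b/2 + b/2 + c/4 \leq \sqrt 5 \sqrt{a^2 + b^2}$ type manipulations combined with $\epsilon < s_0/4$. The contraction argument itself is routine once $\epsilon/s_0 < 1/4$ is in hand.
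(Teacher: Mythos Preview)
Your overall architecture matches the paper's: reduce to block form, solve the quadratic Sylvester (Riccati) equation for $P$ (the paper's $W_E$), set $B=\mathbb{T}(P)$, and orthonormalize via $C=(I+P^TP)^{-1/2}$. The paper cites Stewart--Sun for existence of $P$ rather than running the contraction argument explicitly, but that is only a stylistic difference.

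There is, however, a genuine gap in your derivation of $\fnorm{B}\le\sqrt{5}\,\fnorm{EX_0}$. You propose to bound the four terms of $\mathbb{T}(P)=E_{12}^T-PE_{11}+E_{22}P-PE_{12}P$ separately, using only the crude estimate $\fnorm{P}\le 1/2$ together with $\fnorm{E_{22}}\le\epsilon$. This cannot work: the contribution $\fnorm{E_{22}P}\le\epsilon/2$ is controlled by $\epsilon$, not by $\fnorm{EX_0}=(\fnorm{E_{11}}^2+\fnorm{E_{12}^T}^2)^{1/2}$, and $E_{22}$ simply does not appear in $\fnorm{EX_0}$. (Concretely, if $E_{11}=0$, $E_{12}=0$ and $E_{22}\neq 0$ then your term-by-term bound gives a strictly positive number while $\fnorm{EX_0}=0$; of course in that degenerate case $P=0$ and $B=0$, but your \emph{argument} does not see this.) No scalar inequality of the type you sketch can repair this, because $\fnorm{E_{22}}$ is an independent quantity.

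What is missing is a \emph{refined} bound on $P$ showing that $\fnorm{P}$ is itself of order $\fnorm{E_{21}}/s_0$, not merely $\le 1/2$. The paper obtains this as $\fnorm{W_E}\le 2\fnorm{E_{21}}/s_E$ with $s_E:=s_0-\fnorm{E_{11}}-\fnorm{E_{22}}>2\epsilon$ (quoting Stewart--Sun), and then bounds $\mathbb{T}(W_E)$ in the factored form
\[
\fnorm{\mathbb{T}(W_E)}\le \big\|[-W_E\ \ I]\big\|_2\cdot\Big\|E\begin{bmatrix}I\\ W_E\end{bmatrix}\Big\|_F
\le \frac{\sqrt{5}}{2}\Big(\fnorm{EX_0}+\frac{2\epsilon}{s_E}\fnorm{E_{21}}\Big)\le \sqrt{5}\,\fnorm{EX_0},
\]
using $\|[-W_E\ I]\|_2\le\sqrt{1+\fnorm{W_E}^2}\le\sqrt{5}/2$ and $2\epsilon/s_E<1$. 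Your fixed-point argument can also deliver the refined bound (from $\fnorm{\Phi(0)}\le\fnorm{E_{12}^T}/s_0$ and the contraction), but you must actually invoke it when estimating $E_{22}P$; substituting only $\fnorm{P}\le 1/2$ there is not enough.
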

\begin{proof}
The result essentially follows from existing perturbation results on invariant subspaces~\cite{StewartSun1990,MR3205738}.

It is simple to check that the result is invariant under the choice of bases $X_0, X_0^\perp$ and that we may, in fact, assume without loss of generality that $\begin{bmatrix}
 X_0 & X_0^\perp
\end{bmatrix} = I_n$ and, hence, $A = \begin{bmatrix}
A_{11}& 0 \\ 0 & A_{22}
\end{bmatrix}$. We partition $E = \begin{bmatrix}
E_{11}& E_{12} \\ E_{21} & E_{22}
\end{bmatrix}$ conformally. 

The $n\times k$ matrix $\hat{X}_E = \begin{bmatrix}
I \\ W_E
\end{bmatrix}$ spans an invariant subspace $\mathcal{X}_E$  of $A+E$ if and only if $W_E$ satisfies
\begin{equation} \label{eq:quadmateq_2}
\mathbb{T}(W_E) = 
\begin{bmatrix}
 -W_E & I
\end{bmatrix}E
\begin{bmatrix}
 I \\
 W_E
\end{bmatrix}.
%E_{21} + E_{22}W_E - W_E E_{11} - W_EE_{12}W_E;
\end{equation}
Theorem 3.1 in~\cite{MR3205738} and Theorem 2.7 in~\cite[Chapter V]{StewartSun1990} (or Theorem 2.5 in~\cite{MR3205738})
show that this quadratic matrix equation admits a solution $W_E$ satisfying the bounds
\begin{equation}
\label{eq:W_E_bound_2}
   \fnorm{W_E} \le \frac{2}{s_0}\fnorm{E} \leq \frac{1}{2}, \quad
\fnorm{W_E} \leq \frac{2}{s_E}\fnorm{E_{21}}, 
\end{equation}
where $s_E := s_0 - \fnorm{E_{11}} - \fnorm{E_{22}} > 2\epsilon$. Theorem 3.1 in \cite{MR3205738} also shows that the eigenvalues of $A+E$ belonging to $\mathcal{X}_E$ are within $\epsilon$ distance of $\sigma(A_{11})$. By the assumption on $\epsilon$ and the Wely perturbation Theorem \cite[Corollary 4.10, Chapter IV]{StewartSun1990} (i.e., $|\lambda_i(A+E) - \lambda_i(A)| \leq \|E\|_2 \leq \|E\|_F = \epsilon$), no other eigenvalue of $A+E$ can be that close to $\sigma(A_{11})$, and hence the columns of $
X_E$ span the unique invariant subspace having this property.
Using~\eqref{eq:quadmateq_2} and \eqref{eq:W_E_bound_2}, we thus obtain
\begin{align*}
\|\mathbb{T}(W_E)\|_F \leq& \left\| \begin{bmatrix}
 -W_E & I
\end{bmatrix} \right\|_2 \cdot \left\| E
\begin{bmatrix}
 I \\
 W_E
\end{bmatrix}\right\|_F \le \frac{\sqrt{5}}{2} \left\| E
\begin{bmatrix}
 I \\
 W_E
\end{bmatrix}\right\|_F \\ \le&  \frac{\sqrt{5}}{2} \left( \|E X_0\|_F + \left\| E
\begin{bmatrix}
 0 \\
 W_E
\end{bmatrix}\right\|_F  \right) 
\le \frac{\sqrt{5}}{2} \left( \|E X_0\|_F + \frac{2\epsilon }{s_E} \|E_{21}\|_F \right) \\
\le & \sqrt{5} \|E X_0\|_F.
\end{align*}
 We obtain a (particular) orthonormal basis $\tilde X_E$ of $\mathcal X_E$ by setting
 $C := ( I + W_E^T W_E)^{-1/2}$ and
\[
\tilde{X}_E := \hat X_E  C = \hat X_0 + \begin{bmatrix}
 0 \\ \mathbb{T}^{-1} (B) 
\end{bmatrix} C,
\]
where
$\hat X_0 = X_0 C$ and $B = 
\mathbb{T}(W_E)$.  Because of $I \succeq ( I + W_E^T W_E)^{-1/2}$, it follows from matrix monotonicity that $\|\hat X_0\|_2 \leq 1$, and 
$\fnorm{B} = \|\mathbb{T}(W_E)\|_F \le \sqrt{5} \|E X_0\|_F$. By \eqref{eq:W_E_bound_2}, $\|\mathbb{T}^{-1}(B)\|_F = \|W_E\|_F \leq 1 / 2$. This completes the proof, using that any other orthonormal basis $X_E$ of $\mathcal X_E$ is related to $\tilde X_E$ via an orthonormal change of basis.
\end{proof}
When $\mathcal{X}_0$ is an eigenspace (that is, $\sigma(A_{11})$ contains only a single eigenvalue) then the statement of Lemma~\ref{lemma:invsubspace} simplifies.
\begin{corollary}
\label{lemma:eigenspace}
Under the setting and assumptions of Lemma~\ref{lemma:invsubspace}, assume additionally  that $\sigma(A_{11}) = \{\lambda_0\}$. Then the conclusions of the lemma hold with
\[
    X_E = \hat X_0  + (\lambda_0 I - A)\pesudoinverse B, \quad \|\hat X_0\|_2 \le 1, \quad \fnorm{B} \leq \sqrt{5}\fnorm{EX_0},
\]
where $\pesudoinverse$ denotes the  Moore-Penrose inverse.
\end{corollary}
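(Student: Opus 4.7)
The plan is to specialize the conclusions of Lemma~\ref{lemma:invsubspace} using the structure that results from $\sigma(A_{11})$ being a single eigenvalue $\{\lambda_0\}$. Since $A$ is symmetric, $A_{11}$ is itself symmetric, and having a single eigenvalue forces $A_{11} = \lambda_0 I_k$. The Sylvester operator $\mathbb{T}(X) = X A_{11} - A_{22} X$ therefore reduces to the ordinary linear map $X \mapsto (\lambda_0 I - A_{22}) X$, and the gap assumption $s_0 > 0$ ensures that $\lambda_0 \notin \sigma(A_{22})$, so that $\mathbb{T}^{-1}(B) = (\lambda_0 I - A_{22})^{-1} B$.

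Next I would identify the pseudoinverse appearing in the corollary with the inverse on the relevant invariant subspace. As in the proof of Lemma~\ref{lemma:invsubspace}, we may assume $\begin{bmatrix} X_0 & X_0^\perp \end{bmatrix} = I_n$, so that the spectral decomposition of $\lambda_0 I - A$ yields
\[
(\lambda_0 I - A)^{\dagger} = \begin{bmatrix} X_0 & X_0^\perp \end{bmatrix} \begin{bmatrix} 0 & 0 \\ 0 & (\lambda_0 I - A_{22})^{-1} \end{bmatrix} \begin{bmatrix} X_0 & X_0^\perp \end{bmatrix}^T.
\]
From the lemma, the orthonormal basis takes the form $X_E = \hat X_0 + X_0^\perp \mathbb{T}^{-1}(\tilde B)\, C$, where I write $\tilde B$ for what is called $B$ in the lemma to avoid a name clash with the corollary. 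Defining the new quantity
\[
B := X_0^\perp \tilde B\, C,
\]
a direct computation using $(X_0^\perp)^T X_0^\perp = I$ gives
\[
(\lambda_0 I - A)^{\dagger} B = X_0^\perp (\lambda_0 I - A_{22})^{-1} \tilde B\, C = X_0^\perp \mathbb{T}^{-1}(\tilde B)\, C,
\]
so that $X_E = \hat X_0 + (\lambda_0 I - A)^{\dagger} B$, as asserted.

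Finally, the norm bound on $B$ follows from submultiplicativity: since $X_0^\perp$ has orthonormal columns and $\|C\|_2 \le 1$ from Lemma~\ref{lemma:invsubspace},
\[
\fnorm{B} \le \|X_0^\perp\|_2 \fnorm{\tilde B} \|C\|_2 \le \fnorm{\tilde B} \le \sqrt{5}\, \fnorm{E X_0},
\]
while $\|\hat X_0\|_2 \le 1$ is inherited verbatim from the lemma. None of the steps presents a real obstacle; the only point that requires some care is making the notational identification between the abstract inverse Sylvester operator of the lemma and the Moore--Penrose inverse appearing in the corollary, which is why I would emphasize this via the spectral decomposition above.
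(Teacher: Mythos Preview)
Your proposal is correct and follows essentially the same route as the paper: both specialize the Sylvester operator to $(\lambda_0 I - A_{22})^{-1}$ using $A_{11} = \lambda_0 I_k$, identify $X_0^\perp(\lambda_0 I - A_{22})^{-1}$ with the action of $(\lambda_0 I - A)^{\dagger}$, and absorb the factors $X_0^\perp$ and $C$ into the new matrix $B$. The only cosmetic difference is that you spell out the spectral block form of the pseudoinverse, whereas the paper states the identity $X_0^\perp(\lambda_0 I_{n-k} - A_{22})^{-1} = (\lambda_0 I_n - A)^{\dagger} X_0^\perp$ directly.
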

\begin{proof}
The result of Lemma~\ref{lemma:invsubspace} yields
\[X_E = \hat X_0 + X_0^\perp \mathbb{T}^{-1}(B')C\]
with $\|\hat X_0\|_2 \le 1$, $\fnorm{B'} \leq \sqrt{5}\fnorm{EX_0}$ for some $B' \in \reals^{(n-k) \times k}$, and $\eunorm{C} \leq 1$. Taking into account that $A_{11} = \lambda_0 I_k$ and using that $X_0^\perp$ is an orthonormal basis of $\mathcal{X}_0^\perp$, we obtain for the second term that
\[ 
X_0^\perp \mathbb{T}^{-1}(B')C = X_0^\perp ( \lambda_0 I_{n-k} - A_{22} )^{-1} B' C = 
( \lambda_0 I_n - A )^\dagger  X_0^\perp B' C.
\]
Setting $B = X_0^\perp B'C$ concludes the proof. 
\end{proof}

\subsection{Probabilistic bound with gap}
\label{subsec:withgap}

The following theorem establishes a first probabilistic error bound, which depends on the gap between eigenvalue vectors. 
\begin{theorem}
\label{nonasym theorem}
Given a family of commuting symmetric matrices $\Acal = \{A_k \in \R^{n\times n}\}_{k=1}^{d}$,
let $m$ be the number of mutually distinct eigenvalue vectors $\Lambda_1,\ldots,\Lambda_m \in \R^d$ and set $\gap := \min_{i \neq j}\| \Lambda_i - \Lambda_j\|_2>0$. 

Let $\tilde Q$ denote the output of Algorithm \ref{alg:rjd} with $L = 1$ trial applied to $\tilde \Acal = \{\tilde{A_k} = {A_k} + E_k\}$ for symmetric $E_k$ satisfying $(\fnorm{E_1}^2 + \cdots + \fnorm{E_d}^2)^{1/2} \leq \epsilon$. 
Then for any $\epsilon > 0$ and $R > 1$, it holds that

\[ \Prob\Big( \Big( \sum_{k=1}^{d}\fnormbig{\offdiag(\tilde{Q}^T\tilde{A}_k\tilde{Q})}^2 \Big)^{1/2} \leq R \epsilon  \Big)\geq 
1 - \sqrt{\frac{d}{2\pi}} m(m-1)\gamma,\]
where $\gamma = \max\{\sqrt{5d}/ (R-1), 4\epsilon/\gap \}$.
\end{theorem}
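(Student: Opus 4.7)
The plan is to use Corollary~\ref{lemma:eigenspace} to approximate each common eigenspace $\mathcal{X}_i$ of $\Acal$ by the corresponding invariant subspace of $\tilde A(\mu)$, and then to bound the resulting off-diagonal error by a change-of-basis argument.

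First I would decompose $\mu = r u$ with $r = \|\mu\|_2$ and $u$ uniformly distributed on $\sphere{d}$. Applying Lemma~\ref{probability Bound} to the unit vector $(\Lambda_p - \Lambda_q)/\|\Lambda_p - \Lambda_q\|_2$ and taking a union bound over the $m(m-1)/2$ unordered pairs $p \neq q$ directly yields the probability statement of the theorem. On the complementary ``good event'' we have $|\lambda_p(\mu) - \lambda_q(\mu)| > r \gamma \|\Lambda_p - \Lambda_q\|_2 \geq r\gamma \cdot \gap$ simultaneously for all $p \neq q$. Combined with the bound $\|E(\mu)\|_F \leq r\epsilon$ and the hypothesis $\gamma \geq 4\epsilon/\gap$, this makes the spectral gap of $A(\mu)$ large enough to invoke Corollary~\ref{lemma:eigenspace} at each eigenspace $\mathcal{X}_i$, producing an orthonormal basis $\tilde X_i = \hat X_i + (\lambda_i(\mu) I - A(\mu))\pesudoinverse B_i$ of the associated perturbed invariant subspace, with $\hat X_i \in \mathcal{X}_i$, $\|\hat X_i\|_2 \leq 1$, $B_i \in \mathcal{X}_i^\perp$, and $\|B_i\|_F \leq \sqrt{5}\,\|E(\mu) X_i\|_F$.

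Next I would fix a joint diagonalizer $Q = [X_1, \ldots, X_m]$ of $\Acal$ satisfying $Q^T A_k Q = \Lambda_k$, assemble $\tilde Q = [\tilde X_1, \ldots, \tilde X_m]$ in the matching block order, and set $V = \tilde Q^T Q$. Writing $\tilde Q^T \tilde A_k \tilde Q = V \Lambda_k V^T + \tilde Q^T E_k \tilde Q$ and using the pointwise inequality $\|\offdiag(M)\|_F \leq \|M - \Lambda_k\|_F$ together with Minkowski in the $\ell^2$-sum over $k$ reduces the task to bounding $\sum_k \|V\Lambda_k V^T - \Lambda_k\|_F^2$ together with the trivial $\sum_k \|E_k\|_F^2 \leq \epsilon^2$. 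Crucially, both summed quantities are invariant under the block-orthogonal ambiguity in choosing within-block bases (since $\Lambda_k$ is block-constant along the partition), so the resulting upper bound applies equally to the $\tilde Q$ actually returned by Algorithm~\ref{alg:rjd}.

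The heart of the argument --- and the main technical obstacle --- is the evaluation of $\sum_k \|V\Lambda_k V^T - \Lambda_k\|_F^2$. Using $\|V\Lambda_k V^T - \Lambda_k\|_F = \|V\Lambda_k - \Lambda_k V\|_F$ together with the identity $\sum_k (\lambda_p^{(k)} - \lambda_q^{(k)})^2 = \|\Lambda_p - \Lambda_q\|_2^2$, an entrywise calculation gives
\[
\sum_k \|V\Lambda_k V^T - \Lambda_k\|_F^2 \;=\; \sum_{p \neq q} \|\Lambda_p - \Lambda_q\|_2^2 \, \|V_{pq}\|_F^2,
\]
where $V_{pq}$ is the $(p,q)$ block of $V$. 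The key simplification is that $V_{pq} = Y_p^T X_q$ for $p \neq q$ (since $\hat X_p \perp \mathcal{X}_q$), and the spectral action of the pseudoinverse yields $X_q^T Y_p = (\lambda_p(\mu) - \lambda_q(\mu))^{-1} X_q^T B_p$; substituting this and invoking the good-event gap bound, the factors $\|\Lambda_p - \Lambda_q\|_2$ cancel. The Corollary bound on $\|B_p\|_F$ and the identity $\sum_p \|E(\mu) X_p\|_F^2 = \|E(\mu)\|_F^2$ then reduce everything to a bound on $\|E(\mu)\|_F$; a suitable Cauchy--Schwarz estimate furnishes $\sum_k \|V\Lambda_k V^T - \Lambda_k\|_F^2 \leq 5d\,\epsilon^2/\gamma^2$, and combining with the elementary term via Minkowski gives total off-diagonal error at most $\epsilon + \sqrt{5d}\,\epsilon/\gamma$, which is at most $R\epsilon$ precisely under the hypothesis $\gamma \geq \sqrt{5d}/(R-1)$.
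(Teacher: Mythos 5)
Your proof is correct and takes a genuinely different route through the core estimate. Both arguments start from the same spine --- Lemma~\ref{probability Bound}, a union bound over pairs, and Corollary~\ref{lemma:eigenspace} on the good event --- but the paper then bounds each $k$-term separately via the operator norm $C_{\max}(\mu)$ and pays a crude factor $\sqrt{d}$ when passing from $\max_k$ to $\sum_k$. You instead work directly with $V=\tilde Q^T Q$ and use the \emph{exact} Frobenius identity $\sum_k \|V\Lambda_k V^T-\Lambda_k\|_F^2=\sum_{p\neq q}\|\Lambda_p-\Lambda_q\|_2^2\,\|V_{pq}\|_F^2$, then cancel each $\|\Lambda_p-\Lambda_q\|_2$ against $|\lambda_p(\mu)-\lambda_q(\mu)|$ using the spectral action of $(\lambda_p(\mu)I-A(\mu))^\dagger$ on $X_q$. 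This handles the sum over $k$ losslessly, and --- if you carry out the Cauchy--Schwarz step without slack --- you actually get $\sum_k\|V\Lambda_kV^T-\Lambda_k\|_F^2\le 5\epsilon^2/\gamma^2$, not $5d\,\epsilon^2/\gamma^2$: the steps $\sum_{q\neq p}\|B_p^TX_q\|_F^2\le\|B_p\|_F^2\le 5\|E(\mu)X_p\|_F^2$, $\sum_p\|E(\mu)X_p\|_F^2=\|E(\mu)\|_F^2\le\|\mu\|_2^2\epsilon^2$ introduce no $d$. So your route in fact improves the theorem to $\gamma=\max\{\sqrt{5}/(R-1),\,4\epsilon/\gap\}$, a $\sqrt d$ saving over the paper's $\sqrt{5d}/(R-1)$; your stated factor $5d$ is a conservative over-claim (in the safe direction, so the theorem still follows). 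Two small points worth making explicit: (i) besides within-block orthogonal ambiguity you should note that $\|\offdiag(\cdot)\|_F$ is invariant under a simultaneous row/column permutation, which is the other way the algorithm's $\tilde Q$ can differ from your block-ordered $[\tilde X_1,\dots,\tilde X_m]$; and (ii) the cancellation $\hat X_p^T X_q=0$ and $(\lambda_p(\mu)I-A(\mu))^\dagger X_q=(\lambda_p(\mu)-\lambda_q(\mu))^{-1}X_q$ deserves a sentence recording that the pseudoinverse of a symmetric matrix is symmetric and acts as a scalar on each of its (nonkernel) eigenspaces.
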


\begin{proof}
By Lemma~\ref{lemma:multieigenvalues}, $A(\mu)$ has (with probability one) $m$ mutually distinct eigenvalues $\lambda_i(\mu) = \langle \mu, \Lambda_i \rangle$. Let $n_i$ denote the multiplicity of for $\lambda_i(\mu)$. 
We now assume that 
\begin{equation}\label{eq:eigengap_assum}
    \fnorm{E(\mu)} < |\langle\Lambda_i - \Lambda_j,\mu\rangle | / 4, \quad \forall i \neq j,
\end{equation}
where $E(\mu) := \mu_1E_1 + \cdots + \mu_d E_d$. This assumption allows us to apply Corollary~\ref{lemma:eigenspace} to each eigenspace $\mathcal X_i = \kernel(\lambda_i(\mu)I - A(\mu))$, where $\kernel$ denotes the kernel, and conclude that, after a suitable permutation of its columns, the matrix $\tilde Q$ takes the form
\[
\tilde Q = [\tilde X_1,\ldots,\tilde X_m]
\]
with
\begin{equation} \label{eq:tildexi}
\tilde{X}_i = \hat X_i + (\lambda_i(\mu)I - A(\mu))^\dagger B_i,
\end{equation}
where $\hat{X}_i$ is a basis of $\mathcal X_i$ and $\|B_i\|_F \leq \sqrt{5} \fnorm{E(\mu)X_i}$ for an orthonormal basis $X_i$ of $\mathcal{X}_i$.
We get
\begin{align}
        \big\| \offdiag(\tilde{Q}^T{A}_k\tilde{Q}) \big\|_F^2 \le & 
        \big\| \tilde{Q}^T{A}_k\tilde{Q} - \text{diag}\big(\lambda^{(k)}_1 I_{n_1}, \ldots, \lambda^{(k)}_m I_{n_m}\big)  \big\|_F^2  \nonumber \\ 
        = & \big\| {A}_k\tilde{Q} -  \tilde{Q}\, \text{diag}\big(\lambda^{(k)}_1 I_{n_1}, \ldots, \lambda^{(k)}_m I_{n_m}\big)  \big\|_F^2      \label{eq:offdiag_errornew} \\
        =& \big\| ( {A}_k - \lambda^{(k)}_1 I ) \tilde X_1\big\|_F^2 + \cdots + 
        \big\| ( {A}_k - \lambda^{(k)}_m I ) \tilde X_m \big\|_F^2. \nonumber
\end{align}
By Theorem~\ref{thm:exactlycommuting},  $\mathcal X_i$ is (with probability 1) contained in the eigenspace belonging to the eigenvalue $\lambda^{(k)}_i$ of ${A}_k$. We therefore obtain from~\eqref{eq:tildexi} that
\begin{align*}
\big\| ( {A}_k - \lambda^{(k)}_i I ) \tilde X_i\big\|_F =& 
\big\| ( {A}_k - \lambda^{(k)}_i I ) (\lambda_i(\mu)I - A(\mu))^\dagger B_i\big\|_F
\\ \le & \sqrt{5} \big\| ( {A}_k - \lambda^{(k)}_i I ) (\lambda_i(\mu)I - A(\mu))^\dagger \big\|_2\|E(\mu)X_i\|_F.
\end{align*}
Plugging this inequality into~\eqref{eq:offdiag_errornew} and setting
\begin{equation}
\label{def:cmax}
    C_{\max}(\mu) := \max_{i,k}\big\{\big\| (A_k - \lambda_{i}^{(k)}I) (\lambda_{i}(\mu)I - A(\mu))\pesudoinverse \big\|_2\big\} = \max_{k,j > i} \frac{|\lambda_i^{(k)} - \lambda_j^{(k)} |}{|\langle\Lambda_i - \Lambda_j,\mu\rangle|},
\end{equation}
we thus have
\[
\|\offdiag(\tilde{Q}^T {A}_k\tilde{Q}) \|_F \le \sqrt{5} 
C_{\max}(\mu) \sqrt{ \sum_i \|E(\mu)X_i\|_F^2} = \sqrt{5} 
C_{\max}(\mu)\|E(\mu)\|_F. \]
In turn,
\begin{align}
 \Big( \sum_k \|\offdiag(\tilde{Q}^T \tilde{A}_k\tilde{Q}) \|^2_F \Big)^{1/2}
 \le & \Big( \sum_k \|\offdiag(\tilde{Q}^T {A}_k\tilde{Q}) \|^2_F \Big)^{1/2} + \epsilon \nonumber \\
 \le & \sqrt{5d} 
C_{\max}(\mu)\|E(\mu)\|_F + \epsilon \nonumber\\
\le &
( 1 + \sqrt{5d}C_{\max}(\mu)\eunorm{\mu}) \epsilon \label{eq:bound_total_error},
\end{align}
where we used the Cauchy-Schwarz inequality $\fnorm{E(\mu)} \leq \eunorm{\mu} \epsilon$. In other words, $R\epsilon$ is an upper bound on the off-diagonal norm provided that
\begin{equation} \label{eq:boundcmax}
    C_{\max}(\mu)\eunorm{\mu} \leq ( R-1 ) / \sqrt{5d}
\end{equation}
is satisfied.

It remains to bound the probability that the inequalities~\eqref{eq:eigengap_assum} or~\eqref{eq:boundcmax} fail.
We will use union bounds and first treat~\eqref{eq:boundcmax}. For this purpose, we rewrite~\eqref{def:cmax} as
\begin{equation} \label{eq:cmaxnew}
C_{\max}(\mu) = \max_{j>i} C_{(i,j)}(\mu), \qquad 
C_{(i,j)}(\mu):= \max_k \frac{|\lambda_i^{(k)} - \lambda_j^{(k)} |}{|\langle\Lambda_i - \Lambda_j,\mu\rangle|}.
\end{equation}
For fixed $(i,j)$, choose $k^*$ to maximize the last expression. Then
\begin{align}
    \Prob\Big( \frac{R-1}{\sqrt{5d}} < C_{(i,j)}(\mu) \|\mu\|_2 \Big) 
    &= \Prob\Big( \Big|\Big\langle \Lambda_i - \Lambda_j, \frac{\mu}{\|\mu\|_2} \Big\rangle \Big|  < \frac{\sqrt{5d}}{R-1} |\lambda_i^{(k^*)} - \lambda_j^{(k^*)} |   \Big) \nonumber\\
    &\le  \Prob\Big( \Big|\Big\langle \frac{\Lambda_i - \Lambda_j}{\|\Lambda_i - \Lambda_j\|_2}, \frac{\mu}{\|\mu\|_2} \Big\rangle \Big| < \frac{\sqrt{5d}}{R-1}    \Big), \label{eq:inequ2}
\end{align}
where we used that $|\lambda_i^{(k^*)} - \lambda_j^{(k^*)} |\le \|\Lambda_i - \Lambda_j\|_2$.

Inequality~\eqref{eq:eigengap_assum} fails for fixed $(i,j)$ with probability
 \begin{align*} 
 \Prob\Big( \frac{|\langle\Lambda_i - \Lambda_j,\mu\rangle |}{4} \le \fnorm{E(\mu)} \Big)
 \le & \Prob\Big(\Big| \inner{\frac{\Lambda_i - \Lambda_j}{\eunorm{\Lambda_i - \Lambda_j}}}{\frac{\mu}{\eunorm{\mu}}} \Big| < \frac{4\epsilon}{\gap} \Big),
 \end{align*}
where we used $\fnorm{E(\mu)} \leq \eunorm{\mu}\epsilon$. Combined with~\eqref{eq:inequ2}, this establishes the following bound on the probability that~\eqref{eq:eigengap_assum} or~\eqref{eq:boundcmax} fails for fixed $(i,j)$:
 \[\Prob\Big(\Big| \inner{\frac{\Lambda_i - \Lambda_j}{\eunorm{\Lambda_i - \Lambda_j}}}{\frac{\mu}{\eunorm{\mu}}}\Big| < \gamma\Big) \le \sqrt{\frac{2d}{\pi}}\gamma,\quad \gamma = \max\Big\{\frac{\sqrt{5d}}{R-1}, \frac{4\epsilon}{\gap}\Big\}\]
where we applied Lemma~\ref{probability Bound}, noting that $\mu / \|\mu\|_2$ is distributed uniformly over the unit sphere.
Applying the union bound for the $m(m-1) / 2$ different pairs  $(i,j)$ with $j>i$ yields the bound 
\[ \sqrt{d} / \sqrt{2\pi}\cdot m(m-1)\gamma\]
on the total failure probability, which completes the proof.
\end{proof}

The result of Theorem~\ref{nonasym theorem} depends on the choice of factor $R > 1$, which controls the extent to which the input error $\epsilon$ is magnified in the output. Clearly, $R$ needs to be chosen to sufficiently large in order to get a nontrivial bound on the success probability.  When the inequality $R \leq 1+ \frac{\sqrt{5d}}{4}\frac{\gap}{\epsilon}$ holds, 
which requires $\epsilon$ to remain small relative to $\gap$,  the result of Theorem~\ref{nonasym theorem} reads as
\begin{equation} \label{eq:claim1} \Prob\Big( \Big( \sum_{k=1}^{d}\fnormbig{\offdiag(\tilde{Q}^T\tilde{A}_k\tilde{Q})}^2 \Big)^{1/2} \leq R \epsilon  \Big)\geq 
1 - \sqrt{\frac{5}{2\pi}}\frac{dm(m-1)}{(R-1)},
\end{equation}
that is, the failure probability is inversely proportional to $R-1$.
Otherwise, for $R  > 1+\frac{\sqrt{5d}}{4}\frac{\gap}{\epsilon}$, we obtain
\begin{equation*} \label{eq:claim2}
\Prob\Big( \Big( \sum_{k=1}^{d}\fnormbig{\offdiag(\tilde{Q}^T\tilde{A}_k\tilde{Q})}^2 \Big)^{1/2} \leq R \epsilon  \Big)\geq  1 -\frac{4\sqrt{d}m(m-1)\epsilon}{\sqrt{2\pi} \gap}.
\end{equation*}

\begin{remark}
\label{asymptotic}
If one uses asymptotic perturbation results, such as~\cite[Lemma 2.3]{MR3205738}, instead of Lemma~\ref{lemma:invsubspace}, the arguments from the proof of Theorem \ref{nonasym theorem} yield 
\[\Prob\Big(\sum_{k=1}^{d}\fnormbig{\offdiag(\tilde{Q}^T\tilde{A}_k\tilde{Q})}^2 \leq R^2 \epsilon^2 + \Ocal(\epsilon^3)\Big) \geq 1 - \frac{1}{\sqrt{2\pi}} \frac{dm(m-1)}{(R-1)}\]
for $R>1$. Compared to~\eqref{eq:claim1}, the constant $\sqrt{5}$ is removed. On the other hand, the constants involved in the $\Ocal(\epsilon^3)$ term critically depend on $\gap$.
\end{remark}

A first-order  analysis~\cite{Afsari07,Cardoso95perturbationof} suggests that the optimal approximate joint diagonalizer $\tilde Q$, defined as the minimizer of~\eqref{eq:least_square_measure}, becomes very sensitive to perturbations of the input data in the presence of small eigenvalue gaps. The result~\eqref{eq:claim1} implies that this increased sensitivity for small gaps does not translate into a magnification of the error. It still affects the admissible range for $R$, a shortcoming that will be removed in the following.

\subsection{Probabilistic bound without gap}
\label{subsec:withoutgap}

In this section, we state and prove our main result, a probabilistic error bound independent of gaps between eigenvalue vectors. For the sake of the analysis, we will group the eigenvalue vectors $\Lambda_1,\ldots,\Lambda_n \in \reals^d$ of $\mathcal A$ and the corresponding common eigenvectors $x_1,\ldots, x_n$ defined in Section \ref{subsec:commoneigenvector} into $m$ clusters as follows. Given $\delta > 0$, each vector $\Lambda_i$ is assigned to the cluster $K(i) \in \{1,\ldots,m\}$ such that
\begin{equation} \label{eq:clustercond}
    \|\Lambda_i - \Lambda_j\|_2 \left\{ \begin{array}{ll}
    > \delta & \text{if $K(i)\not=K(j)$}, \\
    \le \delta n_{K(i)} & \text{if $K(i)=K(j)$},
    \end{array} \right.
\end{equation}
where $n_i = \#\{j: K(j) = i\}, i = 1,\ldots,m$ denotes the cardinality of the $i$th cluster. Such a clustering can be obtained by putting $\Lambda_1$ into the first cluster and adding all vectors within $\delta$-distance to this cluster. This procedure is repeated for the remaining eigenvalue vectors to create the second cluster, etc. This  clustering is known as Single Linkage clustering with distance upper bound $\delta$ in the literature \cite[Chapter 22]{shalev-shwartz_ben-david_2014}. The corresponding common eigenvectors $x_1,\ldots,x_n$ are also grouped accordingly.
\begin{comment}
\textcolor{blue}{???Today I came up with an improvement, but didn't have enough time to type it down, the result will roughly be
\[\Prob\Big(\sum_{k=1}^{d}\fnorm{\offdiag(\tilde{Q}^T\tilde{A}_k\tilde{Q})}^2 \leq R^2 \epsilon^2  \Big) \geq (1-4\frac{n^{2}d + n^{3.5}\sqrt{d}}{\sqrt{\pi}(R-1)})^2.\]
???}
\end{comment}

\begin{theorem}
\label{theorem no gap}
Given a family of commuting symmetric matrices $\Acal = \{A_k \in \R^{n\times n}\}_{k=1}^{d}$, let $\tilde Q$ denote the output of Algorithm \ref{alg:rjd} with $L = 1$ trial applied to $\tilde \Acal = \{\tilde{A_k} = {A_k} + E_k\}$, with symmetric $E_k$ satisfying $(\fnorm{E_1}^2 + \cdots + \fnorm{E_d}^2)^{1/2} \leq \epsilon$. 
Then for any $\epsilon > 0$ and $R > 1$, it holds that 
\[\Prob \Big(\Big(\sum_{k=1}^{d}\fnormbig{\offdiag(\tilde{Q}^T\tilde{A}_k\tilde{Q})}^2\Big)^{1/2} \leq R \epsilon \Big) \geq 1-\frac{6}{\sqrt{\pi}}  \frac{n^{3.5}d}{R-1}.  \]
\end{theorem}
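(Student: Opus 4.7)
The plan is to adapt the proof of Theorem~\ref{nonasym theorem} by replacing the eigenspaces of $A(\mu)$ with \emph{cluster} invariant subspaces, trading the unknown $\gap$ for a tunable parameter $\delta > 0$. Fixing $\delta$, I cluster $\Lambda_1,\ldots,\Lambda_n$ by Single Linkage as in~\eqref{eq:clustercond}, and let $\mathcal X_i$ denote the common invariant subspace of $\Acal$ spanned by the common eigenvectors whose eigenvalue vectors lie in cluster $i$, with $X_i$ an orthonormal basis. Setting
\[ \tau \eqdef \min_{K(a)\neq K(b)} \Bigl| \Bigl\langle \mu/\eunorm{\mu},\,(\Lambda_a - \Lambda_b)/\eunorm{\Lambda_a - \Lambda_b}\Bigr\rangle\Bigr|, \]
the spectral separation between $A(\mu)|_{\mathcal X_i}$ and $A(\mu)|_{\mathcal X_i^\perp}$ is at least $\delta\eunorm{\mu}\tau$. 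Hence whenever $\tau > 4\epsilon/\delta$, the hypothesis of Lemma~\ref{lemma:invsubspace} is satisfied at each $\mathcal X_i$ and provides a decomposition $\tilde X_i = \hat X_i + X_i^\perp \mathbb{T}_i^{-1}(B_i) C_i$ for an orthonormal basis of the corresponding invariant subspace of $\tilde A(\mu)$. After a column permutation, these assemble as $\tilde Q = [\tilde X_1,\ldots,\tilde X_m]$.

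Following the proof of Theorem~\ref{nonasym theorem}, I pick $\bar\lambda_i^{(k)}$ as the midrange of $\{\lambda_j^{(k)} : K(j) = i\}$ and use $\fnormbig{\offdiag(\tilde Q^T A_k \tilde Q)}^2 \leq \sum_i \fnormbig{(A_k - \bar\lambda_i^{(k)} I)\tilde X_i}^2$. The $A_k$-invariance of both $\mathcal X_i$ and $\mathcal X_i^\perp$ splits each summand orthogonally into a within-cluster piece $(A_k - \bar\lambda_i^{(k)} I)\hat X_i$ and a between-cluster piece $(A_k - \bar\lambda_i^{(k)} I) X_i^\perp \mathbb{T}_i^{-1}(B_i) C_i$. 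Since the spread of $\lambda_j^{(k)}$ over $\{j : K(j) = i\}$ is at most $\delta n_i$ by~\eqref{eq:clustercond} and $\|\hat X_i\|_2 \leq 1$, the within-cluster piece has Frobenius norm at most $\delta n_i^{3/2}/2$. For the between-cluster piece I expand in the common eigenbasis of $\Acal$: there $\mathbb{T}_i^{-1}$ acts by elementwise division by $\langle \mu, \Lambda_a - \Lambda_b\rangle$ with $K(a)=i$, $K(b)\neq i$, and $(A_k - \bar\lambda_i^{(k)} I) X_i^\perp$ by elementwise multiplication by $\lambda_b^{(k)} - \bar\lambda_i^{(k)}$, so the Frobenius norm is bounded by $C_{\max}^{(i,k)}(\mu)\,\fnorm{B_i}$ with
\[ C_{\max}^{(i,k)}(\mu) \eqdef \max_{K(a)=i,\,K(b)\neq i} \frac{|\lambda_b^{(k)} - \bar\lambda_i^{(k)}|}{|\langle \mu, \Lambda_a - \Lambda_b\rangle|}. \]
The triangle inequality $|\lambda_b^{(k)} - \bar\lambda_i^{(k)}| \leq \eunorm{\Lambda_a - \Lambda_b} + \delta n_i/2$, together with the cross-cluster gap $\eunorm{\Lambda_a - \Lambda_b} > \delta$, then yields $C_{\max}^{(i,k)}(\mu)\eunorm{\mu} \leq (1+n_i/2)/\tau$.

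Summing over $i,k$, using $\sum_i \fnorm{B_i}^2 \leq 5\fnorm{E(\mu)}^2 \leq 5\eunorm{\mu}^2\epsilon^2$, $\sum_i n_i^3 \leq n^3$, and the $+\epsilon$ correction for replacing $A_k$ by $\tilde A_k$, produces
\[ \Bigl(\sum_k \fnormbig{\offdiag(\tilde Q^T \tilde A_k \tilde Q)}^2\Bigr)^{1/2} \leq \tfrac{1}{2}\sqrt{d}\,\delta\,n^{3/2} + \frac{\sqrt{5d}\,(1+n/2)}{\tau}\epsilon + \epsilon. \]
I then set $\delta = c_1\epsilon/(n^{3/2}\sqrt{d})$ with $c_1$ calibrated so that the first two terms are each bounded by $(R-1)\epsilon/2$. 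A pleasant feature is that this single choice of $\delta$ simultaneously satisfies the Lemma~\ref{lemma:invsubspace} applicability condition $\tau > 4\epsilon/\delta$, so both constraints collapse into a single requirement $\tau \geq c_2\, n^{3/2}\sqrt{d}/(R-1)$ for an explicit constant $c_2$.

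Finally, by Lemma~\ref{probability Bound} applied with $u = \mu/\eunorm{\mu}$, uniform on $\sphere{d}$, and $v = (\Lambda_a - \Lambda_b)/\eunorm{\Lambda_a - \Lambda_b}$, each of the at most $n^2/2$ cross-cluster pairs $(a,b)$ contributes a failure probability at most $\sqrt{2d/\pi}\cdot c_2\, n^{3/2}\sqrt{d}/(R-1)$; a union bound yields a total failure probability of order $n^{3.5}d/(R-1)$ that matches the announced $6n^{3.5}d/(\sqrt{\pi}(R-1))$ once the constants are tracked carefully through the balance step. The main technical obstacle I anticipate is obtaining the sharp between-cluster bound: a naive estimate using $\fnorm{\mathbb{T}_i^{-1}}$ and $\|A_k - \bar\lambda_i^{(k)} I\|_2$ separately would lose the cancellation in the ratio $(\lambda_b^{(k)} - \bar\lambda_i^{(k)})/\langle \mu, \Lambda_a - \Lambda_b\rangle$ and would leave the bound depending on the (possibly tiny) inner products in a way that Lemma~\ref{probability Bound} cannot control directly.
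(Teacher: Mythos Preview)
Your proposal is correct and follows the same overall architecture as the paper: cluster the eigenvalue vectors with a tunable $\delta$, apply Lemma~\ref{lemma:invsubspace} to each cluster's invariant subspace, split $(A_k-\bar\lambda_i^{(k)}I)\tilde X_i$ into within-cluster and between-cluster pieces, and then control the failure probability via Lemma~\ref{probability Bound} and a union bound over cross-cluster pairs.

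The one substantive difference is how you handle the between-cluster piece $(A_{22}^{(k)}-\bar\lambda_i^{(k)}I)\mathbb T_i^{-1}(B_i)C_i$. The paper does \emph{not} bound the ratio $|\lambda_b^{(k)}-\bar\lambda_i^{(k)}|/|\langle\mu,\Lambda_a-\Lambda_b\rangle|$ directly; instead it uses the algebraic identity
\[
(A_{22}^{(k)}-\bar\lambda_i^{(k)}I)\,\mathbb T_i^{-1}(B_i)\;=\;-\,\mathbb T_i^{(k)}\!\circ\mathbb T_i^{-1}(B_i)\;+\;\mathbb T_i^{-1}(B_i)\,(A_{11}^{(k)}-\bar\lambda_i^{(k)}I),
\]
so that the first term has elementwise multiplier $(\lambda_a^{(k)}-\lambda_b^{(k)})/\langle\mu,\Lambda_a-\Lambda_b\rangle$, whose modulus is at most $1/(\tau\|\mu\|_2)$ without any $n_i$ factor, while the second term is bounded via $\|\mathbb T_i^{-1}(B_i)\|_F\le 1/2$ from Lemma~\ref{lemma:invsubspace} and $\|A_{11}^{(k)}-\bar\lambda_i^{(k)}I\|_2\le n_i\delta$. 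Your route is more elementary: you bypass this identity by the triangle inequality $|\lambda_b^{(k)}-\bar\lambda_i^{(k)}|\le\|\Lambda_a-\Lambda_b\|_2+\delta n_i/2$ together with the cross-cluster separation $\|\Lambda_a-\Lambda_b\|_2>\delta$, which gives the multiplier bound $(1+n_i/2)/(\tau\|\mu\|_2)$. Both lead to the same $n^{3.5}d/(R-1)$ scaling after balancing $\delta$; in fact your version yields a slightly smaller leading constant (roughly $2\sqrt{2/\pi}$ versus the paper's $6/\sqrt{\pi}$), since the dominant constraint in either argument is the applicability condition $\tau>4\epsilon/\delta$ rather than the between-cluster estimate itself. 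The paper's decomposition, on the other hand, keeps the ``Sylvester quotient'' $\mathbb T_i^{(k)}\circ\mathbb T_i^{-1}$ cleanly separated, which mirrors the structure of Theorem~\ref{nonasym theorem} more transparently.
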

\begin{proof}
For $\delta = C\sqrt{d}\epsilon$ with a parameter $C > 0$ to be specified later, we group the eigenvalue vectors $\Lambda_1,\ldots,\Lambda_n$ and its corresponding common eigenvectors $x_1,\ldots, x_n$  of $\mathcal A$ in $m$ clusters according to~\eqref{eq:clustercond}. We let  $\mathcal{X}_i = \Span\{x_j:K(j) = i\}$ denote the common invariant subspace spanned by the common eigenvectors belonging to the $i$th cluster.

% Note that eigenvalue vectors of $\mathcal{A}$ restricted to $\mathcal{X}_i$ are exactly the eigenvalue vectors in the $i$th cluster. \textcolor{blue}{ I think the previous definition is not correct, because it does not specify the dimension of $X_i$, and even if we say $\mathcal{A}$ restricted to $X_i$ has eigenvalue vectors corresponds to the $i$th cluster (I think this definition is correct), it is not very direct and clean, better to say it in terms of common eigenvectors.}
% \textcolor{red}{??? I don't think this definition is superclean / correct! Please reflect about this and let me know (after thinking about it carefully!) whether I am correct. In view of these subtleties, it is probably best to define a new preliminary section, before Section 2, where you move all the discussion on eigenvalue vectors, common eigenvectors, etc. ???}
Note that, in turn,  $\mathcal{X}_i$ is also an invariant subspace of $A(\mu)$. 
For the moment, we assume that
\begin{equation}\label{eq:eigengap_assum_nogap}
    \fnorm{E(\mu)} < |\langle\Lambda_i - \Lambda_j, \mu\rangle| / 4, \quad \text{ if } K(i) \neq K(j),
\end{equation}
where $E(\mu) := \mu_1E_1 + \cdots + \mu_d E_d$.
Along the lines of the proof of Theorem~\ref{nonasym theorem}, this allows us to apply Lemma~\ref{lemma:invsubspace} and conclude that
\begin{equation} \label{eq:splitoffdiag}
        \big\| \offdiag(\tilde{Q}^T{A}_k\tilde{Q}) \big\|_F^2 \le 
        \big\| ({A}_k - \bar \lambda^{(k)}_1 I ) \tilde X_1 \big\|_F^2 + \cdots + \big\| ({A}_k - \bar \lambda^{(k)}_m I ) \tilde X_m \big\|_F^2,
        %\fnorm{ \begin{bmatrix}
        %({A}_k - \lambda^{(k)}_1 I ) \tilde X_1   & \cdots & 
        %( {A}_k - \lambda^{(k)}_m I ) \tilde X_m \end{bmatrix}  }^2,
\end{equation}
where $\tilde X_i$ is an orthonormal basis of the perturbed invariant subspace $\tilde{\mathcal{X}}_i$ of $\tilde A(\mu)$ corresponding to $\mathcal{X}_i$. Each scalar
$\bar \lambda^{(k)}_i$ is chosen as the $k$th component of an arbitrary eigenvalue vector $\Lambda_j$ contained in the $i$th cluster, that is, $K(j) = i$.

We now analyze the first term in~\eqref{eq:splitoffdiag}. Without loss of generality, we may assume that $\mathcal{X}_1 = \Span  \{e_1,\ldots,e_{n_1}\}$ and thus each $A_k$ is block diagonal:
\[  A_k =  \begin{bmatrix}
A^{(k)}_{11}& 0 \\ 0 & A^{(k)}_{22}
\end{bmatrix}, \quad A^{(k)}_{11} \in \R^{n_1\times n_1}.  \]
Denoting $A_{ii}(\mu) = \mu_1 A^{(k)}_{ii} + \cdots +\mu_d  A^{(d)}_{ii}$, we define
\[\mathbb{T}_1(\mu): \reals^{(n-n_1) \times n_1} \to \reals^{(n-n_1) \times n_1}, \quad  \mathbb{T}_1(\mu) (X) = X A_{11}(\mu) - A_{22}(\mu)X.\]
Lemma \ref{lemma:invsubspace} applied to $\mathcal{X}_1$ implies that 
\begin{equation}
\label{eq:residual_nogap}
    ( {A}_k - \bar \lambda^{(k)}_1 I ) \tilde X_1 = \begin{bmatrix} (A^{(k)}_{11} - \bar \lambda_1^{(k)} I ) \hat X_1  \\ (A^{(k)}_{22} -\bar \lambda_1^{(k)}I) \mathbb{T}_1^{-1}(\mu)(B_1(\mu))C_1\end{bmatrix}, 
\end{equation}
where $\begin{bmatrix} \hat{X}_1 & 0\end{bmatrix}^T$ is a basis of an invariant subspace $\mathcal{X}_1$ with $\|\hat X_1\|_2 \leq 1$, and $B_1(\mu)$, $C_1$ are matrices satisfying

\[\fnorm{B_1(\mu)} \leq \sqrt{5} \fnorm{E(\mu)X_1}, \quad \eunorm{C_1} \leq 1,\quad \|\mathbb{T}_1^{-1}(\mu)(B_1(\mu))\| \leq 1/2.\]

Using that  $\|A^{(k)}_{11} - \bar \lambda_1^{(k)} I\|_2 \leq n_1\delta$ from~\eqref{eq:clustercond} and $\|\hat X_1\|_F \le \sqrt{n_1} \|\hat X_1\|_2 \le \sqrt{n_1}$, the first entry in~\eqref{eq:residual_nogap} satisfies
\begin{equation}
\label{eq:error_within_subspace}
\|(A^{(k)}_{11} - \bar \lambda_1^{(k)} I ) \hat X_1\|_F \le n_1^{3/2} \delta.
\end{equation}
To process the second entry in~\eqref{eq:residual_nogap}, we use the decomposition
\begin{align}
        &(A^{(k)}_{22} - \bar \lambda_1^{(k)}I) \mathbb{T}_1^{-1}(\mu)(B_1(\mu))C_1 \nonumber  \\  = & -\mathbb{T}^{(k)}_1 \compose \mathbb{T}_1^{-1}(\mu)(B_1(\mu))C_1 + \mathbb{T}_1^{-1}(\mu)(B_1(\mu)) (A^{(k)}_{11} - \bar\lambda_1^{(k)}I)C_1,
         \label{eq:error_perp_split}
\end{align}
where the linear matrix operator $\mathbb{T}_1^{(k)}$ is defined as $\mathbb{T}_1^{(k)}(X) = XA^{(k)}_{11} - A^{(k)}_{22}X$.

By diagonalizing $A_{11}^{(k)}, A_{22}^{(k)}$, it can be seen that the eigenvalues of the self-adjoint linear operator
\[\mathbb{T}^{(k)}_1 \circ \mathbb{T}_1^{-1}(\mu): \reals^{(n-n_1)\times n_1} \to \reals^{(n-n_1)\times n_1} \]
are given by
\[\frac{\lambda^{(k)}_i - \lambda_j^{(k)}}{\langle \Lambda_{i} - \Lambda_j, \mu \rangle}  \text{ for all } i,j \text{ such that } K(i) = 1 \text{ and } K(j) \neq 1.\]
Defining 
\[C_{\max}(\mu) :=
\max_{i,j,k \atop K(i) \not= K(j)}
\frac{|\lambda^{(k)}_i - \lambda_j^{(k)}|}{|\langle \Lambda_{i} - \Lambda_j, \mu \rangle|},\]
it thus follows that 
\[\big\|\mathbb{T}^{(k)}_1 \circ \mathbb{T}_1^{-1}(\mu)\big\| \leq C_{\max}(\mu),\]
where $\|\cdot\|$ denotes the norm induced by the Frobenius norm on $\reals^{(n-n_1) \times n_1}$. Note that this inequality also holds for the operators associated with the other summands in \eqref{eq:splitoffdiag}.

Thus, the two terms in~\eqref{eq:error_perp_split} are bounded by
\begin{align*}
\|\mathbb{T}^{(k)}_1 \compose \mathbb{T}_1^{-1}(\mu)(B_1(\mu))C_1\|_F  \le & \sqrt{5} C_{\max}(\mu) \|E(\mu)X_1\|_F \\
        \| \mathbb{T}_1^{-1}(\mu)(B_1(\mu)) (A^{(k)}_{11} - \bar\lambda_1^{(k)}I)C_1\|_F \le &
        \| \mathbb{T}_1^{-1}(\mu)(B_1(\mu))\|_F \|A^{(k)}_{11} - \bar\lambda_1^{(k)}I\|_2 \nonumber \\ 
        \le & n_1 \delta / 2,
\end{align*}
where we used Lemma \ref{lemma:invsubspace} and~\eqref{eq:clustercond} in the last inequality.  %\textcolor{red}{For later reconsideration: Can we maybe get a more elegant bound by using something sharper for $\|E(\mu)X_1\|_F$ in the second part?}
% \textcolor{blue}{If we use $\|E(\mu)X_1\|_F \leq 1 / 2$, we can get final result $1- \frac{6}{\sqrt{\pi}}  \frac{n^{3.5}d}{R-1}$. } \textcolor{blue}{DK: OK, so please fix this. Make sure to not introduce new errors. Thanks!}
Plugging these two inequalities together with~\eqref{eq:error_within_subspace} into~\eqref{eq:residual_nogap} gives
\[
\| ( {A}_k - \bar\lambda^{(k)}_1 I ) \tilde X_1 \|_F \le \sqrt{n_1^{3} \delta^2 + 5 C^2_{\max}(\mu) \|E(\mu)X_1\|^2_F} + n_1 \delta / 2.
\]
Analogous bounds, with $n_1$ replaced by $n_i$, hold for the other common invariant subspaces. Thus, we obtain from~\eqref{eq:splitoffdiag} that
\begin{align*}
\big\| \offdiag(\tilde{Q}^T{A}_k\tilde{Q}) \big\|_F & \le \Big( \sum_i \Big(
\sqrt{n_i^{3} \delta^2 + 5 C^2_{\max}(\mu) \|E(\mu)X_i\|^2_F} + \frac{1}{2} n_i \delta\Big)^2 \Big)^{1/2} \\
& \le  \Big( \sum_i n_i^{3} \delta^2 + 5 C^2_{\max}(\mu) \|E(\mu)X_i\|^2_F \Big)^{1/2} + \frac{1}{2}  \Big( \sum_i  n_i^2 \Big)^{1/2} \delta \\ 
& \le \big( n^3 \delta^2 + 5 C^2_{\max}(\mu) \|E(\mu)\|_F^2  \big)^{1/2} + n \delta / 2.
\end{align*}
% \textcolor{red}{It seems we lose a lot in bounding the sums here. Something that is linear in $n$ for small $n_i$ turns into something cubic in $n$...}
% \textcolor{blue}{This is actully the worse case if we have a line of eigenvalue vectors with distance $\delta $ (I.e., only one huge cluster). It also depends on the number of clusters $m(\delta)$ (worse case could be $(n-m)^3 + m$) and the actual sizes of the clusters, I do not know how to control it better. }
Similarly as in~\eqref{eq:bound_total_error}, this yields
\[
    \big(\sum_{k} \fnormbig{\offdiag(\tilde{Q}^T\tilde{A}_k\tilde{Q})}^2\big)^{1/2} \leq 
    \big(1+ \sqrt{n^3C^2d^2 + 5dC^2_{\max}(\mu)\eunorm{\mu}^2} +  nCd / 2 \big)\epsilon,
\]
where we used $\delta = C\sqrt{d}\epsilon$ and $\fnorm{E(\mu)} \leq \eunorm{\mu}\epsilon$.
In other words, $R\epsilon$ is an upper bound on the off-diagonal norm as long as
\begin{equation} \label{eq:boundcmax_nogap}
    C_{\max}(\mu)\eunorm{\mu} \leq \sqrt{ (R-1 - nCd / 2)^2 - n^3C^2d^2 } / \sqrt{5d}.
\end{equation}

It remains to bound the probability that the inequalities \eqref{eq:eigengap_assum_nogap} or \eqref{eq:boundcmax_nogap} fail. Consider $(i,j)$ fixed with $K(i) \neq K(j)$.
Using the arguments from the proof of Theorem \ref{nonasym theorem} concerning~\eqref{eq:boundcmax}, the  probability that \eqref{eq:boundcmax_nogap} fails is bounded by
\[\Prob\Big( \Big|\Big\langle \frac{\Lambda_i - \Lambda_j}{\|\Lambda_i - \Lambda_j\|_2}, \frac{\mu}{\|\mu\|_2} \Big\rangle \Big| < \Big( \frac{5d}{{ (R-1 - nCd/2)^2 - n^3C^2d^2}} \Big)^{1/2} \Big).\]
Analogously, the probability that inequality \eqref{eq:eigengap_assum_nogap} fails satisfies
 \begin{align*}
 \Prob\Big( |\langle\Lambda_i - \Lambda_j, \mu\rangle| \leq 4\fnorm{E(\mu)} \Big) \nonumber 
 \le  \Prob\Big(\abs{\inner{\frac{\Lambda_i - \Lambda_j}{\eunorm{\Lambda_i - \Lambda_j}}}{\frac{\mu}{\eunorm{\mu}}}} < \frac{4}{C\sqrt{d}} \Big),
 \end{align*}
  where we used $\fnorm{E(\mu)} \leq \eunorm{\mu}\epsilon$ and~\eqref{eq:clustercond}.
Hence, the total probability that inequality~\eqref{eq:eigengap_assum_nogap} or~\eqref{eq:boundcmax_nogap} fails is bounded by 
\begin{equation}
\label{eq:failure_prob_no_gap}
    2\sqrt{\frac{2}{\pi}}\frac{n(n-1)}{C} + \sqrt{\frac{5}{2\pi}}\frac{dn(n-1)}{\sqrt{ (R-1 -  nCd / 2)^2 - n^3C^2d^2} }
\end{equation}
where we applied Lemma \ref{probability Bound} and a union bound for at most $n(n-1) /2$ different pairs of
 $(i,j)$ with $j > i$.
If we let
$R = 1 +  n C d / 2 + Cd\sqrt{n^3 + 5 / 16}$
or, equivalently,
\[C = \frac{R-1}{d\big( n / 2 + \sqrt{n^3 + 5 / 16}\big)},\]
the failure probability bound \eqref{eq:failure_prob_no_gap} becomes \[
    4\sqrt{\frac{2}{\pi}}\frac{n(n-1)}{C} = \sqrt{\frac{2}{\pi}}\frac{dn(n-1)(2n + \sqrt{16n^3 + 5})}{R-1}
    \leq \frac{6}{\sqrt{\pi}}  \frac{n^{3.5}d}{R-1},
\]
where the last inequality holds for all $n \geq 1$. This completes the proof.
\end{proof}

% \begin{remark*}
% In Theorem \ref{nonasym theorem} and Remark \ref{asymptotic}, we need the error tolerance $R = \Ocal(n^2d)$ to have sufficiently large success probability. However, we need $R = \Ocal(n^{3.5}d)$ to have sufficiently large success probability for Theorem \ref{theorem no gap}. The additional factor of $\Ocal(n^{1.5})$ comes from that we introduce additional error within invariant subspaces that correspond to the clusters of eigenvalue vectors. 
% \end{remark*}

\subsection{Observed failure probability of RJD}
\label{subsec:failure_prob}

Let $\tilde{Q}$ denote the orthogonal matrix returned by Algorithm \ref{alg:rjd}. For given $R>1$, we say that Algorithm \ref{alg:rjd} fails when
\[\Big(\sum_{k=1}^{d}\fnormbig{\offdiag(\tilde{Q}^T\tilde{A}_k\tilde{Q})}^2\Big)^{1/2} \geq R\epsilon\]
for $(\fnorm{E_1}^2 + \cdots + \fnorm{E_d}^2)^{1/2} \le \epsilon$. From Theorems~\ref{nonasym theorem} and~\ref{theorem no gap}, we expect the failure probability of Algorithm \ref{alg:rjd} with $L$ trials to be proportional to $1/(R-1)^L$. To verify this experimentally, we consider $n= 10$, $d=5$ and chose an input error $\epsilon = 10^{-5}$, which dominates round-off error. We let $\tilde{\Acal}= \{A_k + E_k\}_{k=1}^d$ where $\Acal = \{A_k\}_{k=1}^d$ is an exactly commuting family, generated by random diagonal matrices transformed with a common random orthogonal matrix. The input error matrices $E_k$ are random symmetric matrices scaled such that $(\fnorm{E_1}^2 + \cdots + \fnorm{E_d}^2)^{1/2} =  \epsilon$. We repeat Algorithm~\ref{alg:rjd} $10^8$ times for different values of $L$ and use the failure frequency to approximate the failure probability for each $R$. As clearly seen from  Figure~\ref{probability_vs_tolerance}, the failure probabilities behave as predicted from Theorem \ref{nonasym theorem} and Theorem \ref{theorem no gap}; the dependence on $R$ is optimal up to constants.

\begin{figure}[h]
    \centering
    \includegraphics[width=1\textwidth]{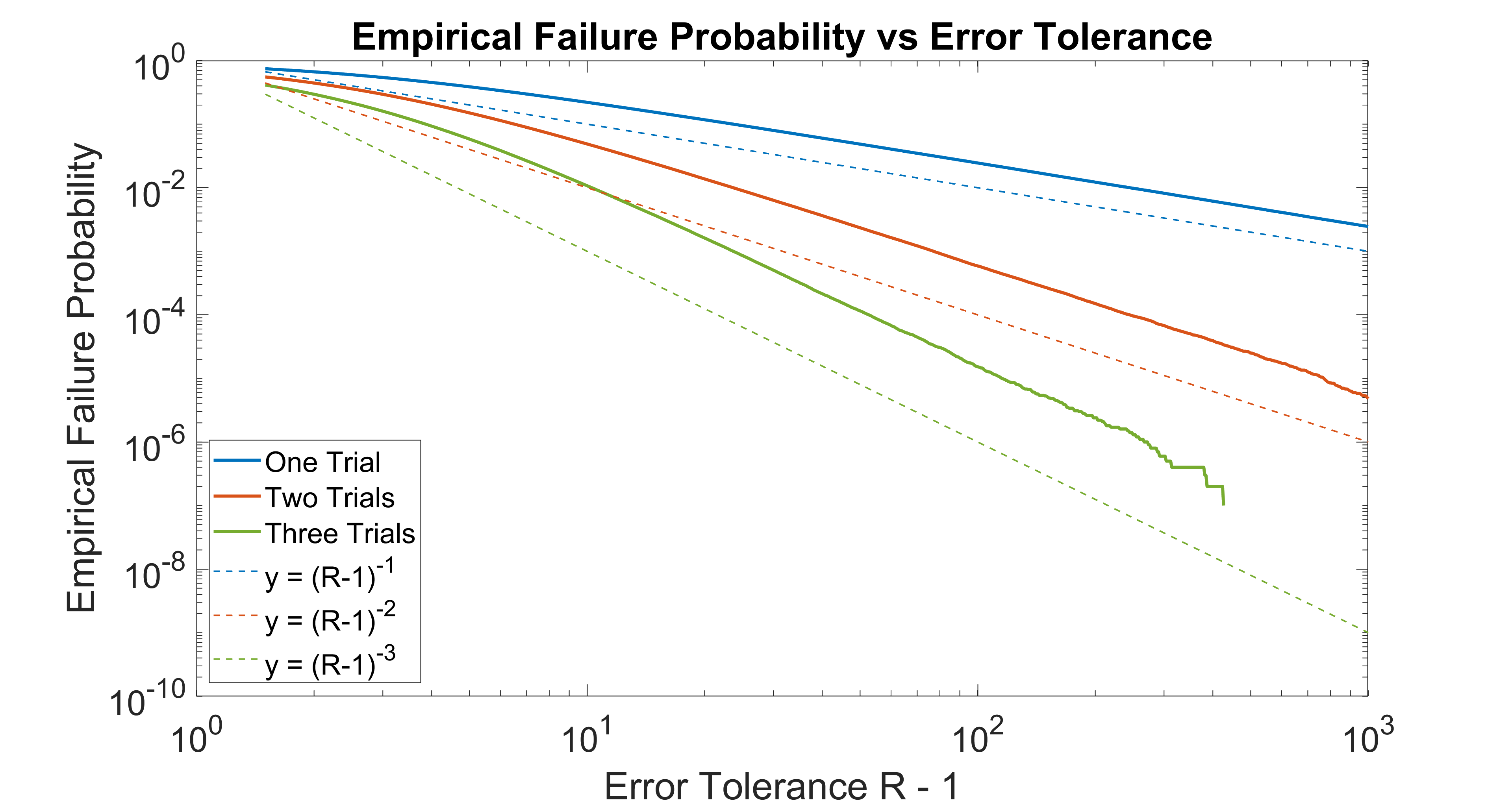}
    \caption{Empirical failure probability vs. $R - 1$ on log-log scale.}
    \label{probability_vs_tolerance}
\end{figure}

%Section 5
\section{A deflation-based RJD algorithm}
\label{sec:deflation}

Algorithm~\ref{alg:rjd} chooses the best orthogonal matrix among $L$ samples and discards the other $L-1$ samples. This is not necessarily the best use of the information contained in the samples. Specifically, the discarded samples could still contain good approximations of common eigenvectors for parts of the spectrum. To extract these approximations, we note that
\[
   \big\|\offdiag(\tilde{Q}^T\tilde{A}_k\tilde{Q})(:,i)\big\|_2 =
   \big\|\big( \tilde{A}_k - (\tilde{Q}(:,i)^T\tilde{A}_k\tilde{Q}(:,i)) I_n)\tilde{Q}(:,i)\big\|_2,
\]
for an orthogonal matrix $\tilde Q$, 
where Matlab's colon notation is used. In other words, the columns of the off-diagonal error contain the residuals for the eigenvector approximations of $A_k$. This allows us to identify and extract columns of $\tilde Q$ with a residual below a certain threshold. Although it is desirable to relate the threshold to the input error; the latter is usually unknown. Instead, we estimate the threshold from the best eigenvector error observed within $L$ trials. We can then partition columns of $\tilde Q$ into $ \tilde Q_{\text{suc}}$ and $\tilde Q_{\text{fail}}$ where $ \tilde Q_{\text{suc}}$ contains all the columns with residuals below the estimated threshold and $\tilde Q_{\text{fail}}$ contains the remaining. We deflate $ \tilde Q_{\text{suc}}$ by restricting each $\tilde A_k$ to $\tilde Q_{\text{fail}}$, i.e., $\tilde A_k = \tilde Q_{\text{fail}}^T \tilde A_k\tilde Q_{\text{fail}}$, and recursively solve the deflated subproblem.
These ideas lead to Algorithm~\ref{alg:drjd} (DRJD).

\begin{algorithm}[h]
\caption{\textbf{D}eflation-based \textbf{R}andomized \textbf{J}oint \textbf{D}iagonalization (DRJD)}
\textbf{Input: } \text{Family of symmetric matrices $\tilde{\Acal} = \{\tilde{A}_k \in \reals^{n \times n}\}_{k=1}^{d}$, number of trials $L$.}\\
 \textbf{Output: } \text{Joint approximate diagonalizer $\tilde{Q}$.}
\begin{algorithmic}
\label{alg:drjd}
\FOR{$i=1$ to $L$}
    \STATE
    Compute $\tilde{Q}_i$ using Algorithm \ref{alg:rjd} with $1$ trial.
    \STATE $t_i = \min_{j} \big\{ \sum_{k=1}^{d}\big\|\offdiag(\tilde{Q}_i^T\tilde{A}_k\tilde{Q}_i)(:,j)\big\|^2_2 \big \}$.
\ENDFOR
\STATE $t = 2 \min\{t_1, \dots, t_L\}$. \tcp{Scale by $2$ to avoid over estimation.}
\FOR{$i=1$ to $L$}
    \STATE $\ind_i = \big\{j: \sum_{k=1}^{d}\big\|\offdiag(\tilde{Q}_i^T\tilde{A}_k\tilde{Q}_i)(:,j)\big\|^2_2 \leq t \big\}$.
\ENDFOR
\STATE $i^* = \argmax_{i }\{ |\ind_i| \}$. \tcp{Maximize the number of successful columns.}
\STATE $\tilde Q_{\text{suc}} = \tilde Q_{i^*}(:,\ind_{i^*})$. \STATE $\tilde Q_{\text{fail}} = \tilde Q_{i^*}(:,\{1,\ldots,n\} \setminus \ind_{i^*})$.
\IF{ $\tilde Q_{\text{fail}}$ is empty}
 \RETURN $\tilde Q \equiv \tilde Q_{\text{suc}}$
\ELSE
\STATE Recursively compute $\tilde{Q}_\text{rec}$ = DRJD$(\{ \tilde Q_{\text{fail}}^T \tilde{A}_k \tilde Q_{\text{fail}} \}_{k=1}^{d}, L)$.
\RETURN $\tilde Q \equiv \begin{bmatrix}
\tilde Q_{\text{suc}} & \tilde Q_{\text{fail}} \tilde{Q}_\text{rec}\end{bmatrix}$
\ENDIF
\end{algorithmic}
\end{algorithm}

%Section 6
\section{Numerical experiments}
\label{sec:numeric}
In this section, we compare RJD and DRJD with existing state-of-the-art algorithms on joint diagonalization of real symmetric matrices on synthetic data and real applications, including Blind Source Separation and Single Topic Models. All algorithms and numerical experiments in this section are implemented in Python 3.8 and executed on a Dell XPS 13 2-In-1
with Intel Core i7-1165G7 CPU and 16 GB of RAM. The implementation is available at \url{https://github.com/haoze12345/symmetric_rjd}. 

Throughout the experiments, we only consider $L=3$ trials to boost the success probability for RJD and DRJD. Obviously, the accuracy of RJD and DRJD can always be improved further by allowing for more trials, at the expense of running time. For all optimization-based algorithms, we use the identity matrix as the initial value.

\subsection{Synthetic data}
In this experiment, RJD and DRJD are compared to existing optimization-based JD methods with orthogonal joint diagonalizers, including JADE \cite{MR1372928}, FFDIAG \cite{MR2247999}, PHAM \cite{Pham2001} and a recently developed Quasi-Newton Method QNDIAG \cite{ablin:hal-01936887} for synthetic data. 

The nearly commuting matrices  $\Acal = \{A_k + E_k \in \reals^{n \times n}\}_{k=1}^{d}$ are generated for different values of $n,d$ in the way described in Section \ref{subsec:failure_prob}. As QNDIAG and PHAM assume positive definite matrices, we enforce  positive definiteness by picking the entries of the diagonal matrices from a uniform distribution on the interval $[0.01, 1.01]$. Note that PHAM does not ensure the output joint diagonalizer to be orthogonal. 

We consider three noise levels $\epsilon = (\sum_{k=1}^{d}\fnorm{E_k}^2)^{1/2}$, $\epsilon_1 = 0$, $\epsilon_2 = 10 ^{-5}$ and $\epsilon_3 = 10^{-1}$. For each setting of $n,d$ and noise level, we repeat the experiment $100$ times on the same input matrices and report the average running time and error. The comparisons are shown in Tables \ref{table:synthetic1}--\ref{table:synthetic3}. All running times are reported in milliseconds and the reported error is the square root of
$\mathcal L(\tilde Q)$ from~\eqref{eq:least_square_measure}.
\begin{table}[!hbt!]
\begin{center}
\caption{Running time and accuracy comparison for $d=10, n=10$}
\label{table:synthetic1}
\small
\begin{tabular}{||c|c|c|c|c|c|c||}
\hline
Name & Time $\epsilon_1$ & Error $\epsilon_1$ & Time $\epsilon_2$ & Error $\epsilon_2$ &Time $\epsilon_3$ &Error $\epsilon_3$\\ 
\hline
JADE & $\num{11.2}$ & $\num{2.0e-08}$ & $\num{10}$ & $\num{8.1e-06}$ & $\num{12.3}$ & $\num{8.0e-02}$\\

FFDIAG & $\num{13.3}$ & $\num{1.9e-08}$ & $\num{12}$ & $\num{8.1e-06}$ & $\num{11.7}$ & $\num{8.0e-02}$\\

PHAM & $\num{29.1}$ & $\num{4.9e-13}$ & $\num{26.1}$ & $\num{2.7e-05}$ & $\num{28.2}$ & $\num{2.2e-01}$\\

QNDIAG & $\num{4.14}$ & $\num{5.5e-14}$ & $\num{4.36}$ & $\num{8.5e-06}$ & $\num{5.33}$ & $\num{8.5e-02}$\\

RJD & $\num{0.389}$ & $\num{2.5e-14}$ & $\num{0.518}$ & $\num{2.0e-05}$ & $\num{0.467}$ & $\num{2.0e-01}$\\

DRJD & $\num{0.448}$ & $\num{2.5e-14}$ & $\num{2.21}$ & $\num{1.1e-05}$ & $\num{1.96}$ & $\num{1.1e-01}$\\

\hline
\end{tabular}
\end{center}
\end{table}

\begin{table}[!hbt!]
\begin{center}
\caption{Running time and accuracy comparison for $d=10, n=100$}
\label{table:synthetic2}
\small
\begin{tabular}{||c|c|c|c|c|c|c||}
\hline
Name & Time $\epsilon_1$ & Error $\epsilon_1$ & Time $\epsilon_2$ & Error $\epsilon_2$ &Time $\epsilon_3$ &Error $\epsilon_3$\\ 
\hline
JADE & $\num{1.93e+03}$ & $\num{2.7e-07}$ & $\num{1.92e+03}$ & $\num{9.3e-06}$ & $\num{2.12e+03}$ & $\num{9.3e-2}$\\

FFDIAG & $\num{3.03e+03}$ & $\num{1.9e-09}$ & $\num{3.09e+03}$ & $\num{9.3e-06}$ & $\num{3.06e+03}$ & $\num{9.3e-2}$\\

PHAM & $\num{4.43e+03}$ & $\num{5.2e-10}$ & $\num{4.52e+03}$ & $\num{1.1e-4}$ & $\num{4.51e+03}$ & $\num{1.1}$\\

QNDIAG & $\num{4.78e+2}$ & $\num{8.7e-14}$ & $\num{3.81e+2}$ & $\num{1.0e-05}$ & $\num{2.24e+2}$ & $\num{0.1}$\\

RJD & $\num{1.73e+1}$ & $\num{8.7e-12}$ & $\num{1.67e+1}$ & $\num{4.9e-4}$ & $\num{1.68e+1}$ & $\num{2.0}$\\

DRJD & $\num{1.80e+1}$ & $\num{1.8e-10}$ & $\num{3.67e+2}$ & $\num{1.3e-05}$ & $\num{3.53e+2}$ & $\num{0.13}$\\

\hline
\end{tabular}
\end{center}
\end{table}

\begin{table}[!hbt!]
\begin{center}
\caption{Running time and accuracy comparison for $d=30, n=30$}
\label{table:synthetic3}
\small
\begin{tabular}{||c|c|c|c|c|c|c||}
\hline
Name & Time $\epsilon_1$ & Error $\epsilon_1$ & Time $\epsilon_2$ & Error $\epsilon_2$ &Time $\epsilon_3$ &Error $\epsilon_3$\\ 
\hline
JADE & $\num{153}$ & $\num{1.4e-07}$ & $\num{274}$ & $\num{9.5e-06}$ & $\num{289}$ & $\num{9.5e-02}$\\

FFDIAG & $\num{241}$ & $\num{1.3e-10}$ & $\num{248}$ & $\num{9.5e-06}$ & $\num{248}$ & $\num{9.5e-02}$\\

PHAM & $\num{350}$ & $\num{5.5e-14}$ & $\num{468}$ & $\num{3.5e-05}$ & $\num{462}$ & $\num{3.3e-01}$\\

QNDIAG & $\num{32.8}$ & $\num{2.0e-14}$ & $\num{8.59}$ & $\num{9.8e-06}$ & $\num{9.91}$ & $\num{9.8e-02}$\\

RJD & $\num{2.36}$ & $\num{3.9e-12}$ & $\num{2.16}$ & $\num{1.6 e-4}$ & $\num{1.97}$ & $\num{1.15e+00}$\\

DRJD & $\num{2.33}$ & $\num{4.4e-12}$ & $\num{9.70}$ & $\num{1.4e-05}$ & $\num{9.98}$ & $\num{1.4e-01}$\\

\hline
\end{tabular}
\end{center}
\end{table}
The tables clearly show the advantages of RJD and DRJD. When roundoff error is the only noise introduced in the matrices, both algorithms are much faster than any of the other algorithms, while returning a similar level of output error. As the noise level increases, DRJD continues to achieve comparably good accuracy, while the error of RJD is sometimes considerably larger. DRJD is also significantly faster than most optimization-based algorithms, with the notable exception of QNDIAG. Note, however, that QNDIAG is restricted to positive definite matrices. 

\subsection{Blind source separation}

First, let us briefly recall Blind Source Separation (BSS) with instantaneous mixture. Consider $m$ source signals $s_j(t)$, $j = 1,...m$, and $n$ observed signals $x_i(t)$, $i = \{1,\dots,n\}$. We assume that the observed signals are instantaneous mixture of the source signals as follows:
\[x_i(t) = \sum_{j = 1}^{m} A_{ij}s_j(t).\]
We assume that $m=n$ and that the mixing matrix $A$ is non-singular. For the source signals, we assume that $s_1(t), \ldots, s_n(t)$ are jointly stationary random processes, that there is at most one Gaussian source and that for each $t$, the signals $s_1(t),\ldots, s_n(t)$ are mutually independent random variables. 

The task of BSS is to find an unmixing matrix $B$ such that each $(Bx)_j(t)$ is proportional to some source $s_i(t)$. If the true mixing matrix $A$ is known, the performance of the obtained unmixing matrix $B$ can be measured with the Moreau-Amari~\cite{NIPS1995_e19347e1} (MA)  index defined as follows:
\[I_{\text{MA}}(M) = \frac{1}{2n(n-1)} \sum_{i=1}^{n}\Big(\frac{\sum_{j=1}^n\
\abs{M_{ij}}}{\max_j{\abs{M_{ij}}}} + \frac{\sum_{j=1}^n\
\abs{M_{ji}}}{\max_j{\abs{M_{ji}}}} - 2\Big), \]
where $M = BA$. Notice that $I_{\text{MA}}(M) \in [0,1]$ and $I_{\text{MA}}(M) = 0$ when $B=A^{-1}$.

Given fixed $M \in \reals^{n \times n}$ and (random) signals $x_i$, the \emph{cumulant matrix} $Q_x(M) \in \reals^{n \times n}$ is defined in~\cite{10.1162/089976699300016863} as follows:
\[Q_x(M)_{ij} := \sum_{k,l=1}^{n}\cum(x_i,x_j,x_k,x_l)M_{kl},\]
where $\cum$ denotes the joint cumulant of random variables.

It is shown that after pre-whitening, 
$\{Q_x(M): M \in \reals^{n \times n}\}$ is an exactly commuting family, and by performing JD, we can recover the unmixing matrix $B$. In this application, the level of the input error is often small and the size of the involved matrices is not large.  To test RJD and DRJD, we perform BSS by JD on the same three audio sources described in~\cite{MiettinenJari2017Bssb} mixed with another white noise signal with standard deviation $\sigma = 0.01$ by a random orthogonal matrix.
%We apply our JD algorithms to the slices of the cumulant tensor as described in \cite{10.1162/089976699300016863} to recover the unmixing matrix.
In our experiment, there are $d=10$ $4 \times 4$ matrices to be jointly diagonalized. The original signals, mixed signals, unmixed signals by RJD and unmixed signals by DRJD  are shown in Figure \ref{fig:original_signal}, Figure \ref{fig:mixed_signal}, Figure \ref{fig:unmixed_signal_RJD} and Figure \ref{fig:unmixed_signal_DRJD} respectively. Visually,  BSS with RJD and DRJD recovers the original signals accurately. 
 \begin{figure}[ht]
    \centering
    \includegraphics[width=1\textwidth]{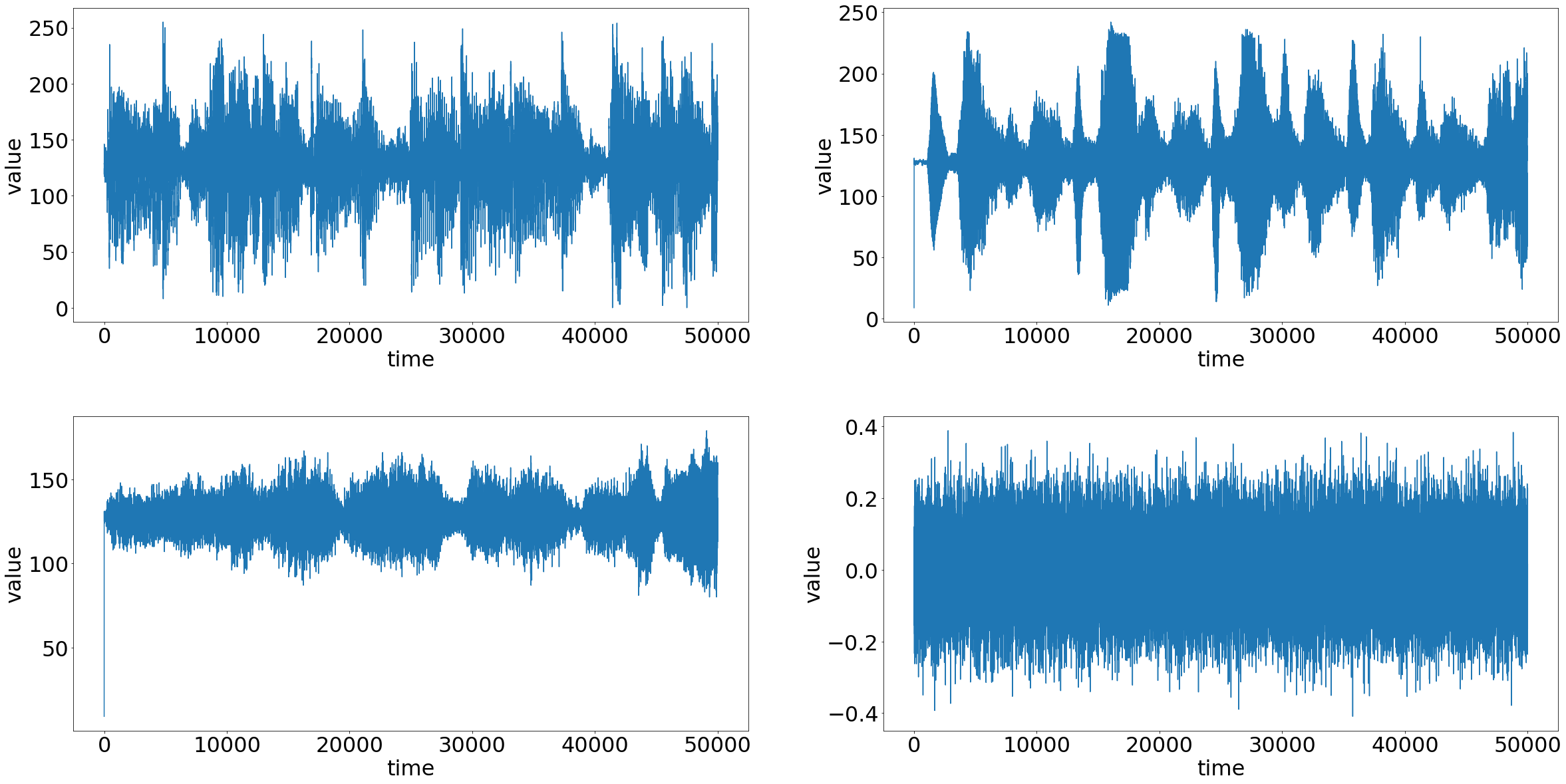}
    \caption{Original signals before mixture, the first three signals are the audio sources and the rest is white noise. All signals are sorted according to the energy.}
    \label{fig:original_signal}
\end{figure}
\begin{figure}[ht]
    \centering
    \includegraphics[width=1\textwidth]{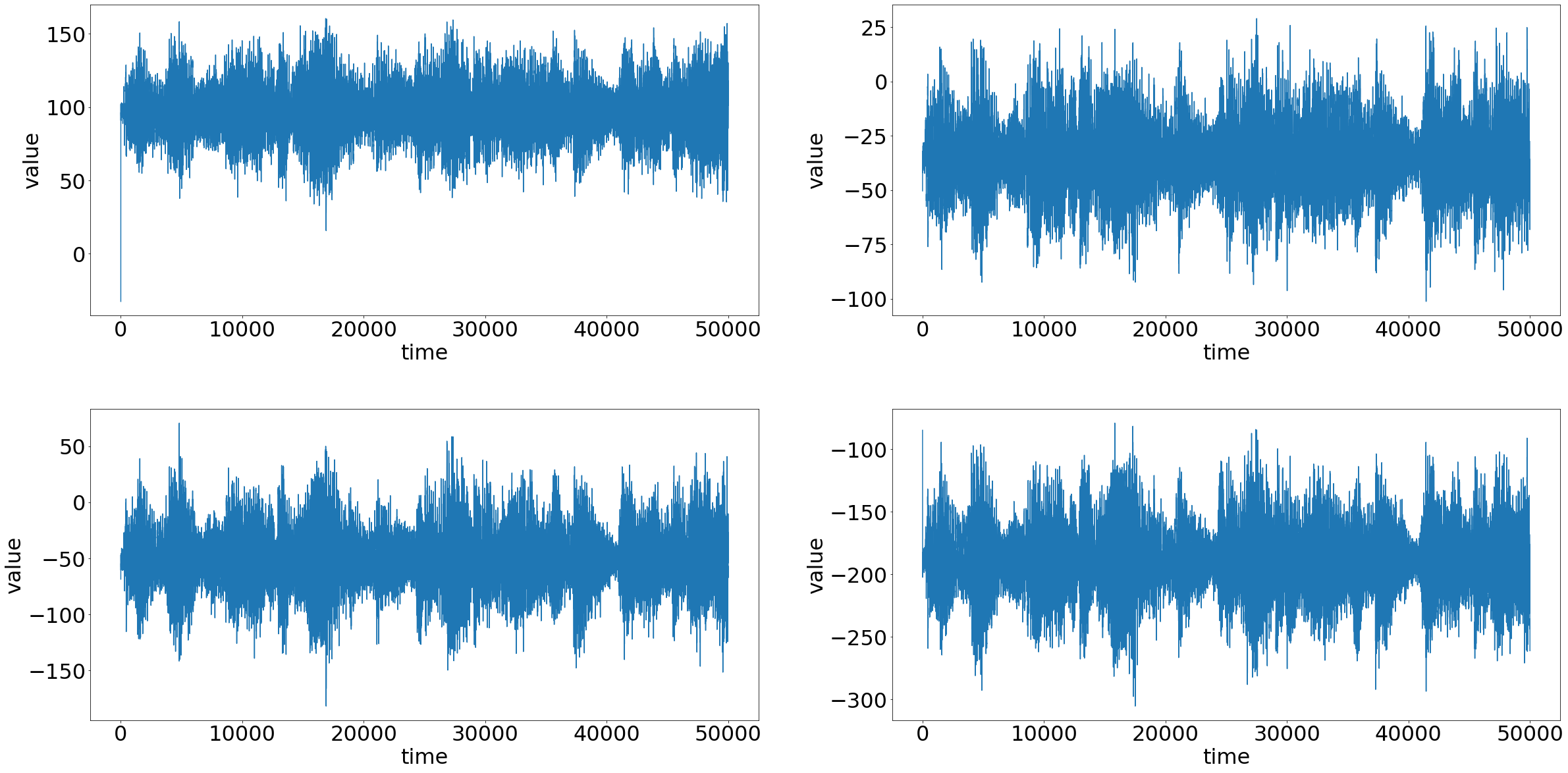}
    \caption{Signals after mixture}
    \label{fig:mixed_signal}
\end{figure}
\begin{figure}[ht]
    \centering
    \includegraphics[width=1\textwidth]{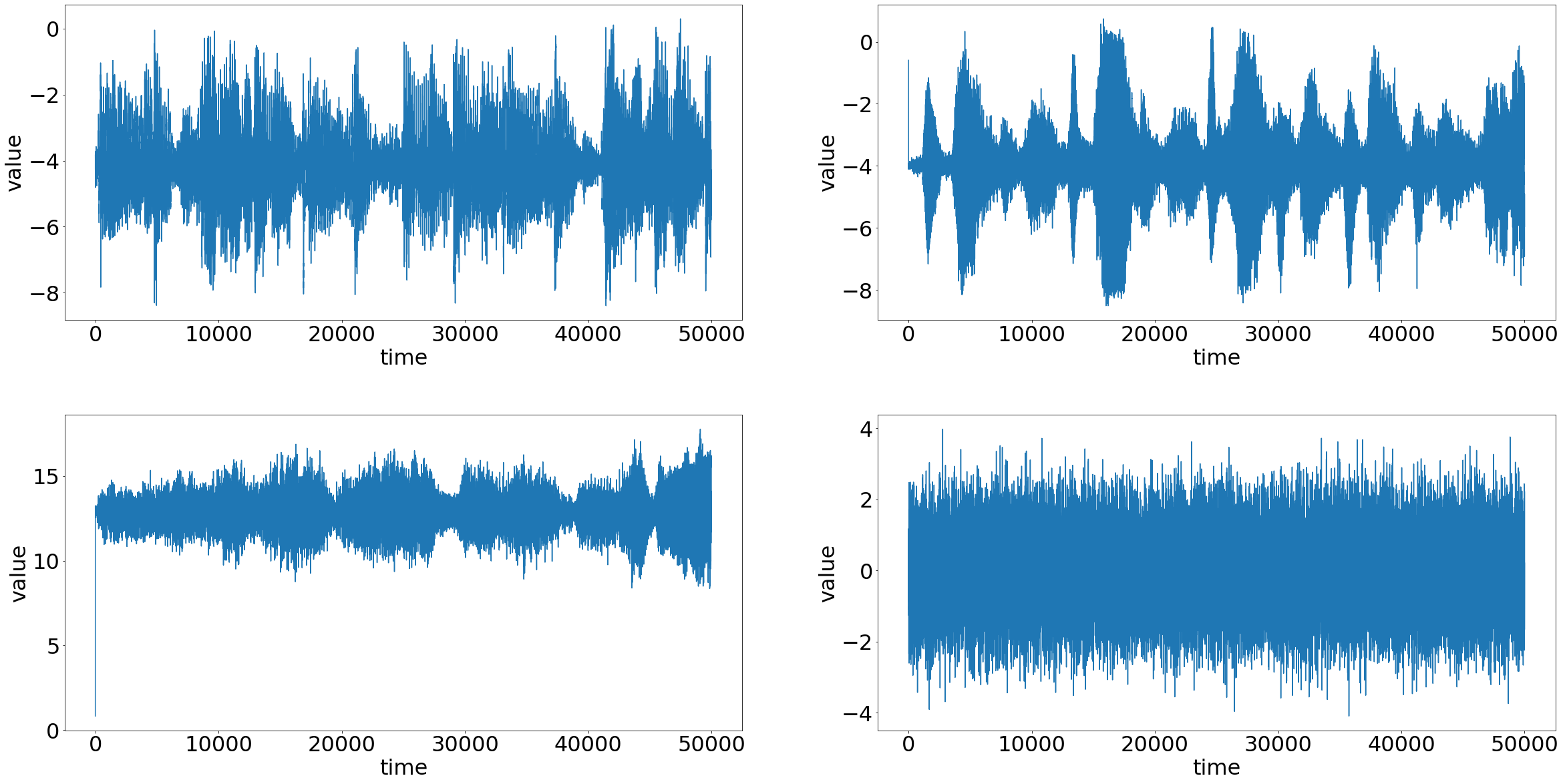}
    \caption{Signals recovered by RJD, signals are sorted according to the energy. The first three signals correspond to the original audio sources before mixture in Figure~\ref{fig:original_signal}.}
    \label{fig:unmixed_signal_RJD}
\end{figure}
\begin{figure}[ht]
    \centering
    \includegraphics[width=1\textwidth]{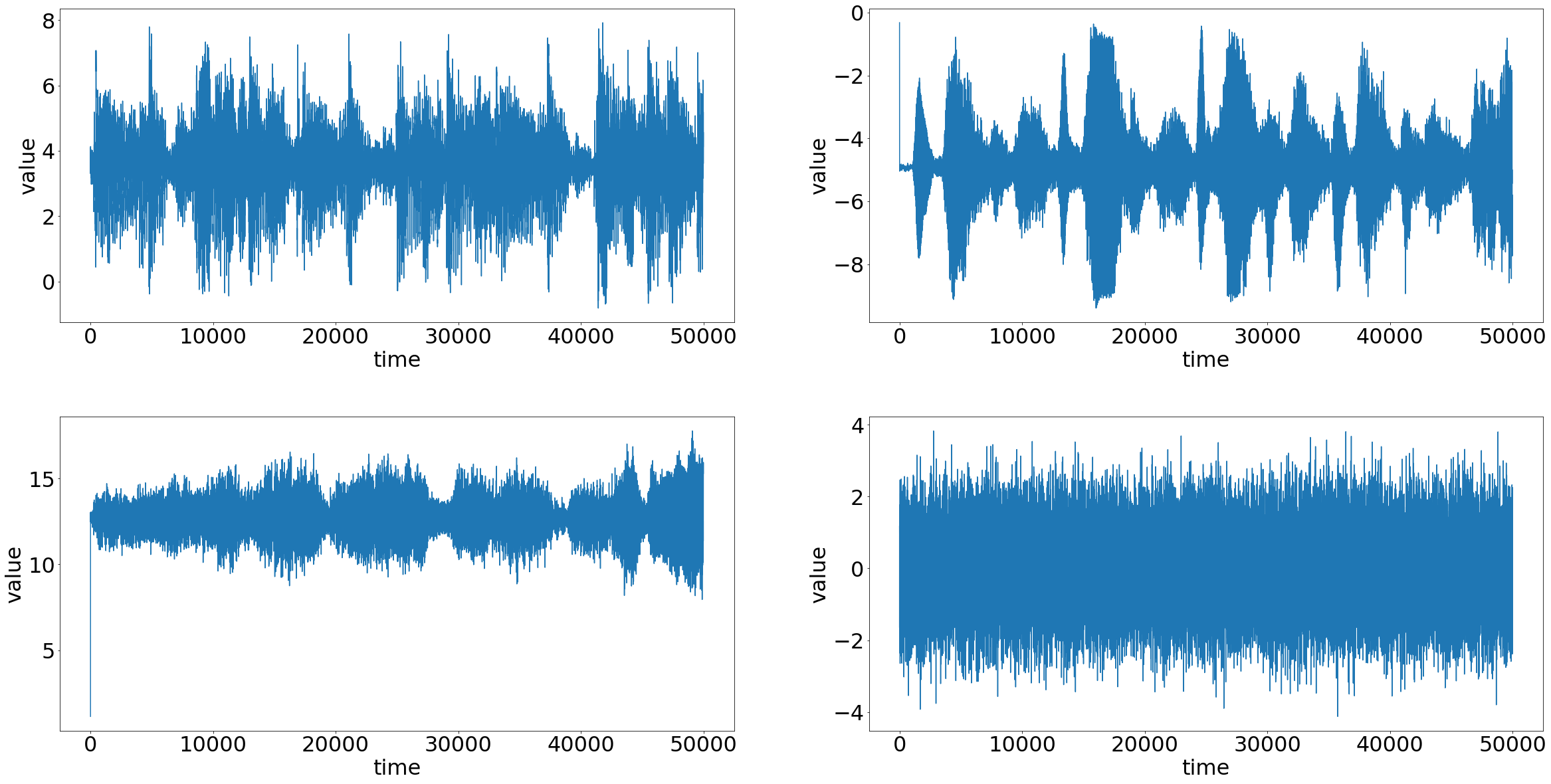}
    \caption{Signals Recovered by DRJD, signals are sorted according to the energy. The first three signals correspond to the original audio sources before mixture in Figure \ref{fig:original_signal}.}
    \label{fig:unmixed_signal_DRJD}
\end{figure}

We also compare RJD and DRJD with JADE and FFDIAG for the same audio data. Note that QNDIAG and PHAM are not applicable because the involved matrices are not positive definite. We perform the experiments 100 times with the same $4$ signals and the same mixing matrix $A$ and record the average MA indices and running time in Table \ref{table:MAindex}. Both RJD and DRJD are significantly faster than the other algorithms and, additionally, DRJD obtains significantly better separation.
 \begin{table}[!hbt!]
\begin{center}
\caption{Running time and MA index comparison for audio data}
\label{table:MAindex}
\begin{tabular}{||c | c | c | |}
\hline
Name & Avg time(ms) & Avg MA index\\
\hline
FFDIAG & $\num{4.645922}$ & $\num{0.074119}$\\
\hline
JADE & $\num{2.782023}$ & $\num{0.074109}$\\
\hline
RJD & $\num{0.299520}$ & $\num{0.074526}$\\
\hline
DRJD & $\num{1.228833}$ & $\num{0.064137}$\\
\hline
\end{tabular}
\end{center}
\end{table}
 %\begin\textcolor{red}{?? What does this mean? When repeating, which parameters of the experiments are varied, the noise, the random stuff in the methods, the signals, etc? ??}
 %We also apply our algorithm to real Magnetoencephalography(MEG) sample data as introduced in \cite{10.3389/fnins.2013.00267}. Following the method and analysis described in \cite{ablin:hal-01936887}, we apply our algorithm to different segments of the MEG data and calculate the off-diagonal loss (\ref{eq:kl_measure}).  As shown in Table 5, our algorithm performs similarly compared to QNDIAG and PHAM but again runs significantly faster.

To further demonstrate the advantages of our randomized algorithm,  we consider JD on Fourier cospectra of the electroencephalogram (EEG) recordings; see~\cite{CONGEDO20082677}. In this application, the input error tends to be not small and we therefore exclude RJD from the comparison. The Fourier cospectra are computed on $84$ resting-state of healthy individuals with pre-whitening and dimension reduction to keep $99.9\%$ of the total variance. For each recording of EEG, we obtain $47$ matrices of size roughly $20 \times 20$ to be jointly diagonalized\interfootnotelinepenalty=10000\footnote{The data is provided by Marco Congedo and is available at \url{https://github.com/Marco-Congedo/STUDIES/tree/master/AJD-Algos-Benchmark}.}.
As the involved matrices are positive definite, we compare DRJD with JADE, QNDIAG, PHAM, and FFDIAG.
Since no ground truth is known in this experiment, we only compare them in terms of running time. For each recording, we jointly diagonalize its Fourier cospectra with different JD algorithms $100$ times and report the running time; see Figure~\ref{fig:egg}. 
It can be clearly seen that  DRJD is at least one order of magnitude faster than the other algorithms.
 %\textcolor{red}{??? It is really odd that you do not compare any quality measure. Could you at least add a sentence in that direction? ???}
\begin{figure}[ht]
    \centering
    \includegraphics[width=1\textwidth]{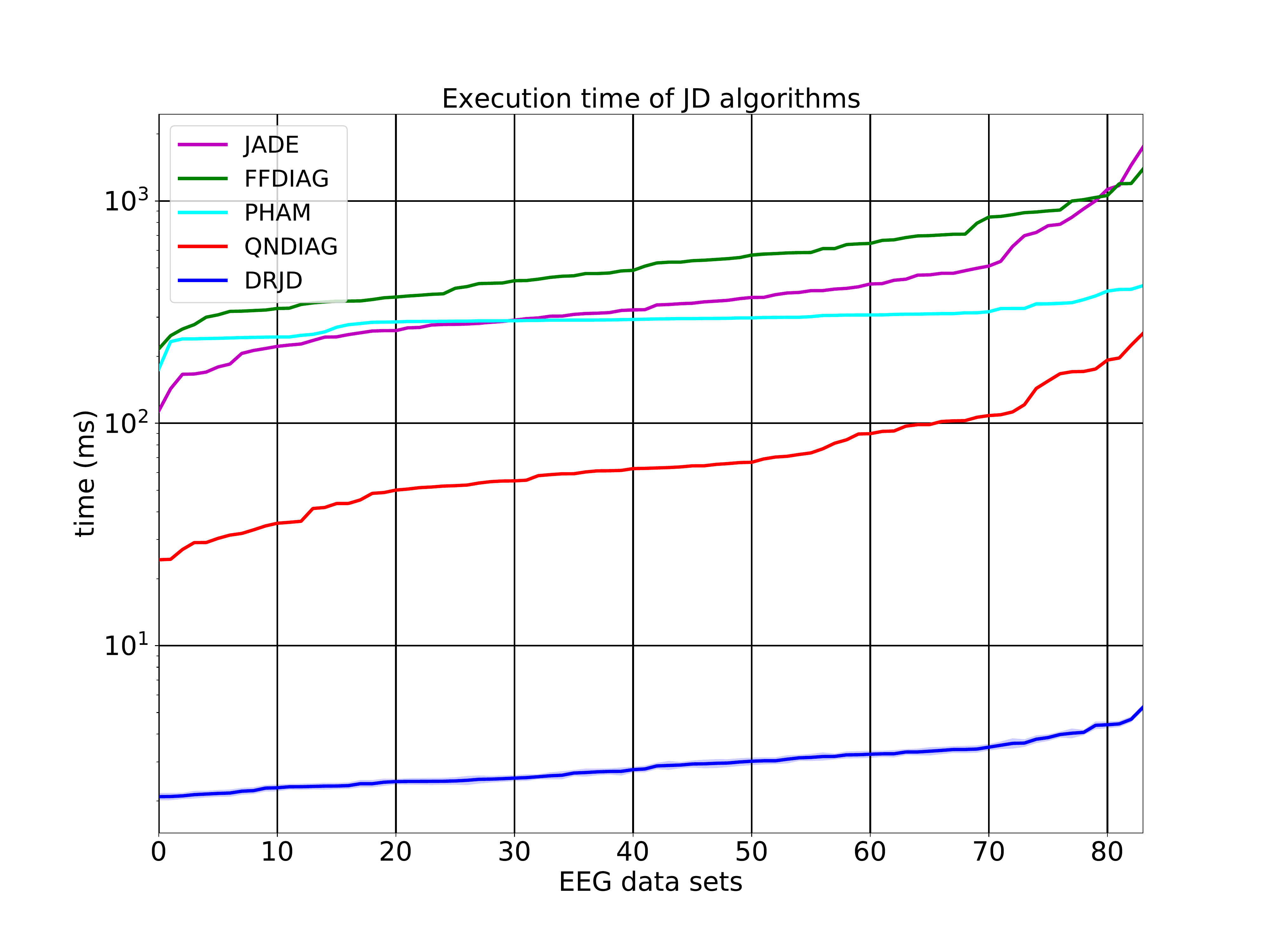}
    \caption{EEG JD running time. The x-axis is the index of the recording, and the y-axis is the running time in log scale. The curves correspond to the averaged running time over $100$ repeats for each recording of different algorithms, and the shaded areas correspond to the $95$\% confidence interval of the running time.}
    \label{fig:egg}
\end{figure}
% we can compare our DRJD with JADE \cite{MR1372928}, QNDIAG \cite{ablin:hal-01936887}, PHAM \cite{MR1824062} and FFDIAG \cite{MR2247999} in terms of running time.

%\input{MEG_runtime_loss}

\subsection{Single topic models}
The JD problem also arises in learning Latent Variable Models, and the Single Topic Model is a special case of Latent Variable Models. It is a simplified model to describe how documents with different topics are generated. Given $N$ documents and $k$ topics,  each document is assumed to have only one topic, and all topics share a common vocabulary of size $n$. Then the model is determined by two parameters, $\omega \in \reals^k$, the probability vector of the topics and $M \in \reals^{n \times k}$, the probability matrix of words given different topics. The documents are generated as follows:
\begin{itemize}
    \item First, a (latent) topic $H \in \{1,\ldots,k\}$ is picked according to the following discrete distribution:
    \[\omega_i := P\big(H=i\big) \text{ with } \omega = \big(\omega_1, \dots , \omega_k\big)^T .\]
    \item Once the topic of the document is fixed, the $t$th word $W_t$ of the document is drawn from the following distribution:
    \[P\big( \text{$W_t$ is the $j$-th word in the vocabulary}| H = i\big) = M_{ji} ,\]
    and we use $\mu_i \in \reals^{n}$ to denote the $i$th column of $M$, which corresponds to the $i$th topic.
\end{itemize}
Let $x_t$ denote the random variable that $x_t = e_i \in \reals^n$ where $e_i$ is the $i$th standard  basis vector if and only if the $t$th word in the document is the $i$th word in the vocabulary.
Then it is shown in \cite[Theorem 3.1]{MR3332930} that 
\[M_2 :=\Exp[x_1 \tensor x_2] = \sum_{i=1}^{k}\omega_i \mu_i \tensor \mu_i,\]
\[M_3 :=\Exp[x_1 \tensor x_2 \tensor x_3] = \sum_{i=1}^{k}\omega_i \mu_i \tensor \mu_i \tensor \mu_i.\]
It is also shown in \cite{MR3332930} that after whitening, the orthogonal joint diagonalization of $M_2$ and slices of $M_3$ can recover the parameters $\omega$ and $\mu_1, \ldots, \mu_k$.

To demonstrate the feasibility of our DRJD for Single Topic Models,  we apply our algorithm to the ``20Newsgroup" dataset \footnote{\url{https://scikit-learn.org/0.19/datasets/twenty_newsgroups.html}}, where each document has only one ground truth topic, and topics are not closely related to each other. We pick documents from the topics `sci.med', `sci.space', `talk.politics.guns' and `alt.atheism'. Then, we pre-process the data by filtering out the stop words and words with very high frequency ($\geq 80\%$). Next, we only include the top $1000$ most frequent words into the vocabulary. Afterwards, we apply the tensor retrieval method described in \cite{RuffiniMatteo2018Anmo} to construct $M_2$ and $M_3$ from the data. Finally, we apply our DRJD to estimate the parameters $\omega$ and $\mu_1, \ldots, \mu_k$. We sort each topic $i$ according to its probability $\omega_i$, and we show the top 10 words according to the probability vector $\mu_i$. The result is shown in Table \ref{table:20news_table_words}. For a reference, we also show the result from the recently developed Singular Value Based Tensor Decomposition (SVTD)\cite{RuffiniMatteo2018Anmo}, an algorithm designed primarily for learning Latent Variable Models. From the tables, we can see that the content of each topic can be easily inferred from the top 10 words for both algorithms, and topics are sorted in the same order, demonstrating that both algorithms are suitable for this dataset.
\begin{table}
\caption{\label{table:20news_table_words} 20NewsGroup - Top 10 words for each topic}
\begin{center}
    \begin{tabular}{cc} DRJD & SVTD \\
    {\scriptsize \begin{tabular}{|c|c|c|c|c|c|}
\hline
Topic 1 & Topic 2 & Topic 3 & Topic 4\\
\hline
god & space & health & file\\
jesus & launch & 1993 & gun\\
people & nasa & hiv & congress\\
atheists & satellite & use & control\\
atheism & edu & medical & firearms\\
does & data & 10 & mr\\
matthew & commercial & number & states\\
religious & satellites & 20 & united\\
just & year & aids & rkba\\
believe & market & april & house\\
\hline
\end{tabular}} & {\scriptsize \begin{tabular}{|c|c|c|c|c|c|}
\hline
Topic 1 & Topic 2 & Topic 3 & Topic 4\\
\hline
jesus & space & health & file\\
god & launch & hiv & gun\\
atheists & satellite & 1993 & congress\\
atheism & commercial & use & control\\
people & market & medical & firearms\\
matthew & satellites & 10 & mr\\
religious & data & aids & states\\
religion & year & number & united\\
does & nasa & 20 & rkba\\
believe & new & april & house\\
\hline
\end{tabular}}
    \end{tabular}
\end{center}
\end{table}

Then we compare DRJD with other Latent Variable Model algorithms, including SVTD and the well-known Tensor Power Method (TPM) \cite{MR3332930}. The quantitative measure of performance we use here is the topic \textit{coherence} introduced in \cite{DBLP:conf/emnlp/MimnoWTLM11}. For a topic with word probability distribution $\mu$, its coherence is defined as
\[\Cohe(\mu) = \sum_{j=2}^{L} \sum_{i=1}^{j-1} \log\frac{D(w_i,w_j)+1}{D(w_i)}\]
where $(w_1,\dots,w_L)$ is the list of top $L = 15$ most popular words in the topic $\mu$, $D(w_i,w_j)$ is the count of documents having word $w_i$ and $w_j$ and $D(w_i)$ is defined analogously. The higher the score, the better the coherence is. As in \cite{RuffiniMatteo2018Anmo}, we compute the mean coherence score for the topics. For these three algorithms, we again report their running time and mean topic coherence score for the same data averaged over 100 runs. The comparison is shown in Table \ref{table:20news_coherence}.
\begin{table}[!hbt!]
\begin{center}
\caption{Running time and coherence comparison for 20NewsGroup}
\label{table:20news_coherence}
\begin{tabular}{||c | c | c | |}
\hline
Name & Avg time(ms) & Avg coherence\\
\hline
SVTD & $\num{45.869329}$ & $\num{-193.988781}$\\
\hline
TPM & $\num{298.112507}$ & $\num{-198.812939}$\\
\hline
DRJD & $\num{3.998115}$ & $\num{-196.973950}$\\
\hline
\end{tabular}
\end{center}
\end{table}

It is demonstrated that our DRJD is a suitable candidate for learning the Single Topic Model. It can reveal the latent topics successfully. Also, it can achieve a slightly better topic coherence than the popular TPM algorithm with a drastically faster speed ($50 \times$ faster).  Compared to SVTD, which is primarily designed for this application, our DRJD still runs significantly faster ($10 \times$ faster), but SVTD achieves the best topic coherence among the three algorithms.

%Section 7
\section{Conclusion}
\label{sec:conclusion}

In this paper, we proposed two randomized algorithms (RJD and DRJD) to jointly (and approximately) diagonalize a family of real symmetric matrices. Our main result shows that RJD returns, with high probability, an orthogonal transformation with an off-diagonal error on the level of the input error. Numerous numerical experiments show RJD is exceptionally efficient for matrices that are very nearly commuting, while DRJD achieves a good balance between accuracy and running time compared to other state-of-art optimization-based algorithms on both synthetic and real-world data. The advantages of RJD and DRJD become particularly relevant for a large family of matrices.

\begin{paragraph}{Acknowledgments.} The authors thank the referees for helpful remarks, which improved the presentation of this manuscript. The second author thanks Stefan Kunis for a discussion, related to~\cite{MR3936543}, which inspired this work.
\end{paragraph}

%Bibliography
\bibliography{mybib.bib}
\bibliographystyle{plainnat}
%make it siam
\end{document}